\documentclass[a4paper,11pt]{amsart}

\usepackage{amssymb}
\usepackage{mathrsfs}
\usepackage{amsmath,amssymb,amsthm,latexsym,amscd,mathrsfs}
\usepackage{indentfirst}
\usepackage{stmaryrd}
\usepackage{graphicx}
\usepackage{extarrows}
\usepackage{multirow}
%%%%Marco's usepackages
\usepackage{latexsym}
\usepackage{amsfonts}
\usepackage{color}
\usepackage{pictexwd,dcpic}
\usepackage{graphicx}
\usepackage{psfrag}
\usepackage{comment}
\usepackage{enumerate}
\usepackage{commath}
\usepackage[final]{showkeys}
%?\usepackage{diagbox}
\usepackage{hyperref}

\setlength{\parindent}{2em}
\setlength{\parskip}{3pt plus1pt minus2pt}
\setlength{\baselineskip}{20pt plus2pt minus1pt}
\setlength{\textheight}{21 true cm}
\setlength{\textwidth}{14.5true cm}
\setlength{\headsep}{10truemm}
\addtolength{\hoffset}{-12mm}

\numberwithin{equation}{section}

\theoremstyle{theorem}
\newtheorem{thm}{Theorem}[section]
\newtheorem{prop}[thm]{Proposition}
\newtheorem{lem}[thm]{Lemma}
\newtheorem{cor}[thm]{Corollary}

\theoremstyle{definition}
\newtheorem{defn}{Definition}
\newtheorem{conj}{Conjecture}[section]
\newtheorem{rmk}{Remark}
\newtheorem{ack}{Acknowledgements}   
\makeatletter

\def\<{\langle}
\def\>{\rangle}
\def\({\left(}
\def\){\right)}
\def\[{\left[}
\def\]{\right]}

\DeclareMathOperator{\rank}{rank}

\DeclareMathOperator{\Span}{Span}

\makeatother

%-------------------------------------------------------------------------------
%??!!!
\newcommand{\x}[2]{{|x_{#1}-x_{#2}|}}
\newcommand{\y}[2]{{|y_{#1}-y_{#2}|}}
\newcommand{\yy}[2]{{|y'_{#1}-y'_{#2}|}}
\DeclareMathOperator{\sgn}{sgn}
\newcommand{\dist}[2]{{\mathop{\mathrm{dist}}(#1,#2)}}

\newcommand{\me}{\mathrm e}
\newcommand{\mi}{\mathrm i}
\def\tran{^{\mathrm T}}
\allowdisplaybreaks[1]

%Title -------------------------------------------------------------------------------
\title{On a Conjecture of Bahri-Xu}
%-------------------------------------------------------------------------------
\author[H. Chen]{Hong Chen}
\address{School of Mathematical Sciences, Beijing Normal University, Beijing 100875, P. R. China}
\email{hchen@mail.bnu.edu.cn}
%-------------------------------------------------------------------------------
\author[J.Q. Ge]{Jianquan Ge}
\address{School of Mathematical Sciences, Laboratory of Mathematics and Complex Systems, Beijing Normal University, Beijing 100875, P.R. CHINA.}
\email{jqge@bnu.edu.cn}
%-------------------------------------------------------------------------------
\author[K. Jia]{Kai Jia}
\address{School of Mathematical Sciences, Beijing Normal University, Beijing 100875, P. R. China}
\email{15935336026@163.com}
%-------------------------------------------------------------------------------
\author[Z.Q. Lu]{Zhiqin Lu}
\address{Department of Mathematics, University of California,
	Irvine, Irvine, CA 92697, USA.}
\email{zlu@math.uci.edu}
%-------------------------------------------------------------------------------
\subjclass[2010]{Primary: 58C40; Secondary: 58E35}
\date{}
\keywords{Yamabe problem; Bahri-Xu Conjecture}
\thanks{The second author is partially supported by Beijing Natural Science Foundation (Z190003). The last author is partially supported by National Sciences Foundation of USA (DMS-19-08513).}
%------------------------------------------------------------------------------
%-------------------------------------------------------------------------------
%-------------------------------------------------------------------------------
\begin{document}
\maketitle
%-------------------------------------------------------------------------------
\begin{abstract}
	In order to study the Yamabe changing-sign problem, Bahri and Xu proposed a conjecture which is a universal inequality for $p$ points in $\mathbb R^m$. They have verified the conjecture for $p\leq3$.
	In this paper, we first simplify this conjecture by giving two sufficient and necessary conditions inductively.
	Then we prove the conjecture for the basic case $m=1$ with arbitrary $p$. In addition, for the cases when $p=4,5$ and $m\geq2$, we manage to reduce them to the basic case $m=1$ and thus prove them as well.
\end{abstract}
%------------------------------------------------------------------------------
%-------------------------------------------------------------------------------
%-------------------------------------------------------------------------------
\section{Introduction}
In the study of Yamabe problem on $S^3$, we consider a semi-linear equation
\begin{equation}\label{1}
	\Delta_{{\mathbb R}^3} u+u^5=0,\qquad u>0.
\end{equation}
It is well known that
\begin{equation*}
	\delta(a,\lambda)=\frac{c\sqrt\lambda}{(1+\lambda^2|x-a|^2)^{1/2}}
\end{equation*}
for any real number $\lambda$ and vector $a\in\mathbb R^3$ (with appropriate constant $c$) is a solution to the above equation. Moreover, for $\lambda_i$ large enough, the combinations
$\sum_{i=1}^p\delta(a_i,\lambda_i)$ are \emph{almost} solutions to the equation.

From PDE point of view, it would be interesting to study Equation~\eqref{1} without  the positivity assumption on $u$ as well. In the book ~\cite{BX07},  Bahri and Xu introduced the problem of studying the Morse Lemma at infinity. Morse Lemma is interesting in this situation because even for positive solutions, Equation~\eqref{1} is a  variational problem with defects. In order to study these defects, in~\cite{BX07}, the following question was introduced.

We consider the functional $J(u)$ defined by
\begin{equation*}
	J(u)=\left(\int_{\mathbb R^3} u^6\, \mathrm{d}x\right)^{-1}
\end{equation*}
on the space
\begin{equation*}
	\Sigma=\{w\mid \int_{\mathbb R^3}(|\nabla w|^2+w^6)dx<\infty,\int_{\mathbb R^3}|\nabla w|^2\, dx=1\}.
\end{equation*}

Let $\bar w_1,\cdots,\bar w_p$ be $p$ (possibly sign changing) solutions of ~\eqref{1}. Let $a_1,\cdots,a_p\in\mathbb R^3$ be $p$ points. Let $\lambda_1,\cdots,\lambda_p>0$ be very large numbers.
We wish to establish a formula for the functional $J$ that corresponds to the  following linear combination
\begin{equation*}
	\sum_{i=1}^p\alpha_i \sqrt{\lambda_i}\bar w_i(\lambda_i(x-a_i))
\end{equation*}
of solutions.
More precisely, we wish to establish an asymptotic expansion of the following
\begin{equation*}
	J\left(\sum_{i=1}^p\alpha_i\sqrt{\lambda_i}\bar w_i(\lambda_i(x-a_i))+v\right),
\end{equation*}
where $\alpha_i$ are constants and $v$ is a function satisfying Condition (Vo) in~\cite[Page 3]{BX07}.

In order to establish such an asymptotic expansion, Bahri and Xu made two additional assumptions (conjectures), one of which is purely linear algebraic, and can be stated as follows
\begin{conj}\label{conj}
	Let $x_1,\dots,x_p$ $(p\geq2)$ be distinct vectors in $\mathbb R^m$.
			Then there is a positive constant $c=c(p,m)$ depending only on $p$ and $m$, such that
	\begin{align}\label{eqn:conj}
		|AU|^2 + \sup_{1\leq i\leq p} \left|U^T\(\frac{\partial A}{\partial x_i}\)U\right|
		\geq c(p,m) \sum_{\substack{i,j\\j\neq i}} \frac{u_j^2}{|x_i-x_j|^2}
	\end{align}
	for any $U=\(u_1,\dots,u_p\)\tran\in\mathbb R^p$, where $A=(a_{ij})_{1\leq i,j\leq p}$ is a $p\times p$ matrix with entries
	\begin{align*}
		a_{ij}=\begin{cases}
		0,					&i=j	\\
		\frac{1}{\x{i}{j}}, &i\neq j
		\end{cases}
	\end{align*}
	that is,
	\begin{align*}%\label{eqn:conj-A}
		A=\begin{pmatrix}
		0 & \frac{1}{|x_1-x_2|} & \cdots & \frac1{|x_1-x_p|} \\
		\frac1{\x21} & 0 & \cdots & \frac1{\x2p}\\
		\vdots & \vdots & \ddots & \vdots \\
		\frac1{|x_p-x_1|} & \frac1{|x_p-x_2|} & \cdots & 0
		\end{pmatrix},
	\end{align*}
	and
	\begin{align*}
		\dpd{A}{x_k}
		=	\(\dpd{a_{ij}}{x_k}\)_{1\leq i,j\leq p}	
	\end{align*}
	is a vector-valued $p\times p$ matrix.
\end{conj}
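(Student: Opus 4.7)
My plan is to prove the inequality \eqref{eqn:conj} by induction on $p$, using a compactness-contradiction argument combined with a cluster decomposition. Observe first that \eqref{eqn:conj} is invariant under the scaling $x_i\mapsto\lambda x_i$ (both sides rescale by $\lambda^{-2}$) and under translation, so one may normalize $\min_{i\neq j}|x_i-x_j|=1$. Under this normalization the right-hand side is uniformly bounded by $C(p)\,\|U\|^2$, and the task becomes to prove a uniform weighted coercivity estimate for $A$ together with its first-order derivatives. The base case $p=2$ reduces to direct computation with the $2\times 2$ matrix whose only nonzero entry is $1/|x_1-x_2|$.

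For the inductive step, I argue by contradiction. Suppose there is a sequence of configurations $\{x_i^{(n)}\}$ and unit vectors $U^{(n)}$, normalized as above, for which the ratio of the left-hand side to the right-hand side of \eqref{eqn:conj} tends to zero. After extracting a subsequence the $p$ points decompose into clusters: indices $i,j$ lie in the same cluster if $|x_i^{(n)}-x_j^{(n)}|$ stays bounded, and distinct clusters drift apart. The intra-cluster interactions contribute to $A$ and $\partial A/\partial x_k$ at the dominant scale, whereas inter-cluster interactions are of lower order; thus the inequality reduces, in the limit, to the corresponding inequality on each cluster, which contains strictly fewer than $p$ points and is handled by the inductive hypothesis. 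The delicate point is to quantify the decoupling and to use the derivative term $\sup_k|U^T \partial_{x_k}A\,U|$ to track exactly the blow-up rate of the right-hand side caused by two points becoming close.

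For the basic case $m=1$, which will serve as the anchor for all higher-dimensional reductions, order the points so that $x_1<x_2<\cdots<x_p$; the one-dimensional structure then gives the explicit formula
\[
 U^T \frac{\partial A}{\partial x_k}\, U \;=\; -2\,u_k \sum_{j\ne k}\frac{u_j\sgn(x_k-x_j)}{(x_k-x_j)^2},
\]
whose sign pattern is entirely determined by the order of the points and makes the induction combinatorially tractable. For $m\ge 2$ with small $p$ my proposal is to project the configuration onto a well-chosen line $\ell\subset\mathbb{R}^m$ (for instance the line through the two closest points, or the one spanned by the gradient $U^T(\partial A/\partial x_k)U$ itself) and show that the one-dimensional inequality on $\ell$ implies the full inequality up to a constant depending on $p$ and $m$. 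The main obstacle, and the heart of the proof, is to rule out nondegenerate limiting configurations satisfying both $AU=0$ and $U^T\partial_{x_k}A\,U=0$ for every $k$: such a pair $(x_1,\dots,x_p;U)$ would violate \eqref{eqn:conj} with any constant, so one needs an algebraic rigidity statement forcing every such apparent obstruction to arise from a genuine cluster degeneration to which the inductive hypothesis then applies. This rigidity statement is very likely the content of the two inductive sufficient-and-necessary conditions announced in the abstract.
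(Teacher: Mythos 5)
Your high-level plan captures two of the structural ingredients the paper actually uses. The cluster-decomposition induction (a sequence of configurations normalized so that $\min_{i\neq j}|x_i-x_j|=1$, a subsequence along which the ratio degenerates, and splitting into groups that stay bounded vs.\ drift apart) is essentially the paper's Lemma~\ref{lemma1}. And your observation that the ``main obstacle'' is to rule out nondegenerate configurations solving both $AU=0$ and $U^{T}(\partial A/\partial x_k)U=0$ for all $k$ is precisely the content of Theorem~\ref{thm:A}: the paper proves that, granted the inductive hypothesis at lower $p$, the conjecture is equivalent to the non-existence of such kernel pairs. So your diagnosis of where the difficulty lives is correct.

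The genuine gap is the technical tool you propose for exploiting this. Orthogonal projection of the configuration onto a ``well-chosen line $\ell\subset\mathbb{R}^m$'' does not preserve the equations \eqref{eqn:equiv1}--\eqref{eqn:equiv2}: the entries $1/|x_i-x_j|$ and $(x_i-x_j)/|x_i-x_j|^3$ change non-trivially under projection, so the one-dimensional inequality on $\ell$ says nothing about the original configuration. What actually makes the reduction work in the paper is conformal invariance: Proposition~\ref{prop:invariant} shows that the kernel equations are invariant under Kelvin transforms (inversions), and this is used in three distinct places -- to compactify the degenerating configurations in Lemma~\ref{lemma2} via the inversion $t_i=x_i/|x_i|^2$, $v_i=u_i/|x_i|$; to pass from Theorem~\ref{thm:A} to Theorem~\ref{thm:B} by stereographically projecting the points onto $\mathbb{S}^m$ and observing that equation \eqref{eqn:equiv1'} then follows automatically from \eqref{eqn:equiv2''}; and, in the $p=4,5$ arguments, to move three points onto a genuine line (by inverting the circle through them) and then force the remaining points to also lie on that line via orthogonality. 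None of this is a projection; it is the Möbius group acting on the configuration, which your proposal never invokes. Similarly, your $m=1$ plan stops at ``the sign pattern is combinatorially tractable,'' which is not yet a proof: the paper's Theorem~\ref{thm-m1} first sends the $p$ points to $\mathbb{S}^1$, parametrizes $y_k=\me^{\mi\alpha_k}$, and after a nontrivial algebraic cancellation using identity \eqref{eqn:imagine} reduces the kernel equation to $CW=0$ with $C=(\sgn(k-s))$; the contradiction then hinges on $\rank C$ being $p$ or $p-1$ depending on parity, plus the independence of the real and imaginary parts of $W$. You would need to supply something of that kind, and it is not visible from the raw one-dimensional formula you wrote down. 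Finally, a scope remark: neither your proposal nor the paper proves the full conjecture; the paper establishes it only for $m=1$ (all $p$) and $p\le 5$ (all $m$), and the cluster/compactness machinery by itself does not close the general case without the unresolved rigidity statement for $p\ge 6$, $m\ge 2$.
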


Equivalently, \eqref{eqn:conj} can be rewritten as
\begin{align}\label{eqn:rewrite}
	\sum_{1\leq i\leq p} \left| \sum_{j\neq i} \frac{u_j}{\x ij} \right|^2
	+ 2\sup_{1\leq i\leq p} \left|\sum_{j\neq i} u_i \frac{x_i-x_j}{\x ij^3} u_j \right|
	\geq c(p,m) \sum_{\substack{i,j\\j\neq i}} \frac{u_j^2}{\x ij^2}.
\end{align}
Let $I_1,I_2$ denote the two hands of (\ref{eqn:rewrite}):
\begin{align*}
	I_1 &= \sum_{1\leq i \leq p} \left|\sum_{j\neq i} \frac{u_j}{|x_i-x_j|}\right|^2
	+2\sup_{1\leq i\leq p}\left|\sum_{j\neq i} u_i \frac{x_i-x_j}{\x ij^3} u_j\right|,\\	%\label{eqn:l1-I1}
	I_2 &= \sum_{\substack{i,j\\j\neq i}} \frac{u_j^2}{\x ij^2}. %\label{eqn:l1-I2}
\end{align*}

\begin{rmk}
	The conjecture was stated in~\cite[page 4, Conjecture 2]{BX07}, where $m=3$. We extend the conjecture from 3 dimensional to arbitrary dimensional. We found that even in the case $m=1$, that is, when all $x_i$ are real numbers instead of vectors, the conjecture is still interesting and open.
\end{rmk}
\begin{rmk}\label{rmk:m}
	Due to the symmetries of (\ref{eqn:rewrite}), we only need to consider the conjecture for $m\leq p-1$.
	In fact, if $m\geq p$, we may assume that $x_1,\dots,x_p$ lie in $\mathbb R^{p-1}\hookrightarrow\mathbb R^m$ since any $p$ points in $\mathbb R^m$ must lie in some affine subspace of dimension $p-1$ and  (\ref{eqn:rewrite}) is invariant under ambient similarities of $\mathbb R^m$.
\end{rmk}

The main purpose of this paper is to study Conjecture \ref{conj}.
If  $p = 2$, then $|AU|^2=\frac{u_1^2+u_2^2}{\x12^2}=I_2$, and thus the conjecture is valid for $c(p,m)=1$.

In~\cite{Xu05}, Xu proved the case $p=m=3$. By the above remark, Xu's result actually implies the cases of $p=3$ with arbitrary $m$.

From now on, we are only concerned with $p\geq4$.

In this paper, we shall prove some equivalent conditions for Conjecture~\ref{conj}, see Theorems~\ref{thm:A} and \ref{thm:B}.
In addition, we shall prove the conjecture in some special cases, namely, the basic case $m=1$ with arbitrary $p$; and $p=4,5$ with arbitrary $m$.

In Section~\ref{sec:2} we give the following equivalent characterization for Conjecture~\ref{conj}.
\begin{thm}\label{thm:A}
	Let $p_0\geq4$.
	Conjecture~\ref{conj} holds for any $4\leq p\leq p_0$,
	if and only if for any $4\leq p\leq p_0$ and distinct $x_1,\dots,x_p\in\mathbb R^m$, the equations
	\begin{align}
		\sum_{j\neq i} \frac{u_j}{\x{i}{j}} &= 0,	\quad 1\leq i\leq p	\label{eqn:equiv1},\\\intertext{and,}
		u_i\sum_{j\neq i} \frac{x_i-x_j}{\x{i}{j}^3} u_j &=0,	\quad 1\leq i\leq p	\label{eqn:equiv2},
	\end{align}
	about $u_1,\dots,u_p$ have NO non-zero solution.
\end{thm}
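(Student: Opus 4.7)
The forward implication is straightforward. Suppose \eqref{eqn:rewrite} holds with a positive constant $c(p,m)$. If $(u_1,\dots,u_p)$ solves both \eqref{eqn:equiv1} and \eqref{eqn:equiv2}, then every component of $AU$ vanishes, so $|AU|^2=0$, and every summand in the sup term of $I_1$ also vanishes. Hence $I_1=0$, which by the inequality forces $I_2=0$, i.e.\ $U=0$.

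For the reverse implication I would argue by induction on $p$, the cases $p=2,3$ being already known. Assume the inequality has been established for every $4\leq p'<p$, and that system \eqref{eqn:equiv1}--\eqref{eqn:equiv2} admits only the trivial solution at $p$ points. The goal is to deduce the inequality at $p$. Suppose, for contradiction, that it fails. Exploiting the joint invariance of the ratio $I_1/I_2$ under translation, dilation of $\mathbb{R}^m$, and rescaling of $U$, extract a sequence $(x_1^{(n)},\dots,x_p^{(n)};U^{(n)})$ normalized by $x_1^{(n)}=0$, $\max_{i,j}|x_i^{(n)}-x_j^{(n)}|=1$, and $|U^{(n)}|=1$, with $I_1^{(n)}/I_2^{(n)}\to0$. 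Passing to a subsequence, $U^{(n)}\to U^*$ with $|U^*|=1$ and $x_i^{(n)}\to x_i^*\in\mathbb{R}^m$.

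In the \emph{non-degenerate} case, where the $x_i^*$ remain pairwise distinct, $I_1^{(n)}$ and $I_2^{(n)}$ converge to the corresponding quantities $I_1^*,I_2^*$ at $(U^*,x^*)$ with $I_2^*>0$; vanishing of the ratio forces $I_1^*=0$, so $U^*\neq0$ is a non-trivial solution of \eqref{eqn:equiv1}--\eqref{eqn:equiv2} at $x^*$, contradicting the hypothesis at $p$.

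The principal obstacle is the \emph{degenerate} case, where some of the $x_i^*$ coincide and the configuration collapses into clusters $C_1,\dots,C_K$ (with $K\geq2$, so every $|C_k|\leq p-1$) under the relation $i\sim j\iff x_i^*=x_j^*$. For each cluster of size at least two, rescale by its intrinsic diameter $\epsilon_k^{(n)}\to0$; the intra-cluster contributions to both $I_1$ and $I_2$ then blow up like $(\epsilon_k^{(n)})^{-2}$ and are governed, at leading order, by sub-functionals $I_1^{C_k},I_2^{C_k}$ built on the rescaled cluster, while cross-cluster terms stay $O(1)$ and are lower order. Since $|C_k|<p$, the induction hypothesis (or the known cases $|C_k|\in\{2,3\}$) supplies $I_1^{C_k}\geq c(|C_k|,m)\,I_2^{C_k}$; after reconciling the global $\sup_i$ in $I_1$ with the cluster-wise $\sup_{i\in C_k}$ (using $\max_k\geq\tfrac1K\sum_k$), summing these inequalities produces a uniform positive lower bound on $\liminf_n I_1^{(n)}/I_2^{(n)}$, the desired contradiction. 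The delicate technical point, and the main difficulty, is the book-keeping when different clusters contract at different rates, or when the weights $u_j$ themselves degenerate inside a cluster; in general this calls for a nested blow-up along a hierarchy of sub-clusters, to which the same induction can be applied at each level.
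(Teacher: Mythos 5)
Your forward direction is correct and coincides with the paper's: a non-zero solution to \eqref{eqn:equiv1}--\eqref{eqn:equiv2} makes $I_1=0$ while $I_2>0$, contradicting \eqref{eqn:rewrite}.

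Your reverse direction (compactness and blow-up at collapsing clusters) is a natural strategy and is, in spirit, dual to the paper's argument under inversion. But there is a genuine gap precisely at the step you yourself flag as ``the main difficulty,'' and that step is the entire content of the theorem. You claim that after rescaling each shrinking cluster $C_k$, the intra-cluster parts of $I_1$ and $I_2$ dominate and the induction hypothesis applies clusterwise with cross-cluster terms being lower order. This need not hold: if the weights $u_j$ supported on a shrinking cluster themselves tend to zero at a rate comparable to or faster than the cluster diameter, then $I_2^{C_k}$ can be dominated by the $O(1)$ cross-cluster contributions, and $I_2\approx\sum_k I_2^{C_k}$ fails. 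Moreover, with several clusters contracting at disparate rates, or with sub-clusters nested inside a cluster, a single blow-up does not disentangle the scales and your $\max_k\ge\frac1K\sum_k$ bookkeeping on the sup term does not assemble into a uniform bound; a genuine hierarchical analysis is needed, which the proposal does not carry out.

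The paper sidesteps all of this by normalizing with $\min_{i\ne j}\x ij=1$ rather than $\max=1$, so the possible degeneracy is points escaping to infinity rather than collapsing. Lemma~\ref{lemma1} handles the escape one point at a time: having bounded $\x13,\dots,\x1s$, it produces an $M_s$ beyond which the inequality holds whenever $\x1{s+1}>M_s$, using the inductive inequality for $s+1$ points on one side and for $p-s$ points on the other side together with a small-coupling estimate between the two groups. Iterating $s=2,\dots,p-2$ traps $x_1,\dots,x_{p-1}$ in a fixed compact set, which replaces your multi-scale cluster hierarchy by a finite chain of single-scale reductions. For the remaining free point $x_p$, Lemma~\ref{lemma2} splits into ``$x_p$ far'' and ``$x_p$ bounded''; in the bounded case your compactness/continuity argument applies, and in the far case a Kelvin transform (Proposition~\ref{prop:invariant}) converts a hypothetical non-trivial solution into a non-trivial solution of an auxiliary $(p-1)$-point system augmented by one scalar, contradicting the positivity of the auxiliary infimum $c_1(p,m,x_1,\dots,x_{p-1})$. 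In short, your sketch identifies the right obstruction but does not resolve it; the paper resolves it by a different normalization and a Kelvin-transform device that avoid the nested blow-up altogether.
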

%The proof is based on induction on $p_0$: $p_0=4$ is the induction base; Theorem~\ref{thm:equiv} is the induction step from $p_0-1$ to $p_0$.
%To prove theorem~\ref{thm:equiv}, we need Lemmas~\ref{lemma1} and \ref{lemma2}.

In Section~\ref{sec:3} we firstly prove the following simplified equivalent conditions, using which we then prove the special cases mentioned before.
\begin{thm}\label{thm:B}
	Let $p_0\geq4$.
	Conjecture~\ref{conj} holds for $4\leq p\leq p_0$, if and only if for $4\leq p\leq p_0$, given any distinct $y_1,\dots,y_p\in \mathbb S^m$, the equations
	\begin{align}
		\sum_{j\neq i} \frac{y_i-y_j}{\y ij^3} v_j =0,	\quad 1\leq i\leq p	\label{eqn:equiv2''}
	\end{align} for $v_1,\dots,v_p$ have NO non-zero solution.
\end{thm}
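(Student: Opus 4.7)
The plan is to deduce Theorem~\ref{thm:B} from Theorem~\ref{thm:A} via inverse stereographic projection. By Theorem~\ref{thm:A}, Conjecture~\ref{conj} for $4\leq p\leq p_0$ is equivalent to the nonexistence of a nonzero simultaneous solution of \eqref{eqn:equiv1} and \eqref{eqn:equiv2} on $\mathbb R^m$, so it suffices to match this pair with the single equation \eqref{eqn:equiv2''} on $\mathbb S^m$. Given distinct $y_1,\dots,y_p\in\mathbb S^m$, I first move a point $N\in\mathbb S^m\setminus\{y_1,\dots,y_p\}$ to the north pole and let $x_i\in\mathbb R^m$ be the stereographic image of $y_i$; the classical identity then reads $|y_i-y_j|=\frac{2|x_i-x_j|}{\mu_i\mu_j}$ with $\mu_i:=\sqrt{1+|x_i|^2}$, and the substitution $u_j:=\mu_j v_j$ sets up the dictionary between the two sets of unknowns.

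The heart of the proof is the identity that the full vector equation $W_i:=\sum_{j\neq i}(y_i-y_j)v_j/|y_i-y_j|^3=0$ in $\mathbb R^{m+1}$ is equivalent to \eqref{eqn:equiv1} together with the \emph{strong form}
\begin{equation*}
\sum_{j\neq i}\frac{x_i-x_j}{|x_i-x_j|^3}u_j=0,\quad 1\leq i\leq p,
\end{equation*}
of \eqref{eqn:equiv2}. Dotting $W_i$ with the outward normal $y_i$ and using $\langle y_i,y_i-y_j\rangle=\frac12|y_i-y_j|^2$ gives $\frac12\sum_{j\neq i}v_j/|y_i-y_j|=\frac{\mu_i}{4}\sum_{j\neq i}u_j/|x_i-x_j|$, so the normal component of $W_i$ vanishes exactly when \eqref{eqn:equiv1} holds. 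A direct expansion of the remaining components of $W_i$ in terms of the sums $R_i:=\sum u_j/|x_i-x_j|^3$, $T_i:=\sum x_j u_j/|x_i-x_j|^3$, and $U_i:=\sum |x_j|^2 u_j/|x_i-x_j|^3$ then shows that, once \eqref{eqn:equiv1} holds, $W_i=0$ is equivalent to $T_i=x_iR_i$, i.e.\ to the strong form of \eqref{eqn:equiv2}; the consistency relation $U_i=|x_i|^2R_i$ drops out automatically by dotting the strong form with $x_i$ and invoking \eqref{eqn:equiv1} via $2x_i\cdot x_j=|x_i|^2+|x_j|^2-|x_i-x_j|^2$.

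With this equivalence in hand, one direction of Theorem~\ref{thm:B} is immediate: a nonzero solution of \eqref{eqn:equiv2''} at some $p\in[4,p_0]$ yields, via $u_j=\mu_j v_j$, a nonzero solution of \eqref{eqn:equiv1} and the strong (hence ordinary) form of \eqref{eqn:equiv2} on $\mathbb R^m$, which violates Conjecture~\ref{conj} through Theorem~\ref{thm:A}. For the reverse direction, assume $(u_1,\dots,u_p)\neq 0$ solves \eqref{eqn:equiv1} and \eqref{eqn:equiv2} for some $p\in[4,p_0]$; put $I:=\{i:u_i\neq 0\}$ and $k:=|I|$. On the $k$-point subsystem indexed by $I$, equation \eqref{eqn:equiv1} persists, while the nonvanishing of every retained $u_i$ promotes \eqref{eqn:equiv2} to its strong form. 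The cases $k=1,2$ are immediately excluded since \eqref{eqn:equiv1} at any $i\in I$ would then force some other retained $u_{i'}$ to vanish, and $k=3$ is ruled out by Xu's proof of Conjecture~\ref{conj} for $p=3$ combined with Theorem~\ref{thm:A}. Hence $k\in[4,p_0]$, and lifting the $k$-point solution to the sphere via $v_j:=u_j/\mu_j$ yields a nonzero solution of \eqref{eqn:equiv2''} on $\mathbb S^m$ for this $k$.

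The principal difficulty lies in the careful component-wise expansion of $W_i$ and the verification that the full vector equation $W_i=0$ encodes precisely \eqref{eqn:equiv1} and the strong form of \eqref{eqn:equiv2}, with no extra information; everything else is largely formal. The ``drop the zeros'' reduction is short but genuinely relies on the already-established cases $p=2,3$ of Conjecture~\ref{conj} to close off small values of $k$.
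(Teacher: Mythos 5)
Your argument is essentially the paper's: pass between $\mathbb R^m$ and $\mathbb S^m$ by stereographic projection (the paper's Kelvin transform about $N=(0,\dots,0,1)$, Proposition~\ref{prop:invariant}), observe that \eqref{eqn:equiv2''} implies \eqref{eqn:equiv1'} by dotting with $2y_i$ and hence reproduces the pair \eqref{eqn:equiv1'}--\eqref{eqn:equiv2'}, and conversely pass from \eqref{eqn:equiv2} to its strong form using that a nonzero solution has all components nonzero --- precisely the content of Theorem~\ref{thm:equiv2}, fed into Theorem~\ref{thm:A} as you do. The one point where you deviate is in establishing that nonvanishing: you restrict to the support $I$ and dispose of $|I|\le 3$ case by case, whereas the paper's Corollary~\ref{cor:nonzero} argues uniformly (if some $u_\alpha=0$, the induced $(p-1)$-subsystem has a nonzero solution, contradicting the inductive hypothesis via Theorem~\ref{thm:equiv}). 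Both routes work, but note a small slip in your case analysis: for $k=1$ the stated reason is vacuous, since the sum in \eqref{eqn:equiv1} at the unique $i\in I$ is empty; you need instead \eqref{eqn:equiv1} at some $i\notin I$, which reads $u_{i_1}/|x_i-x_{i_1}|=0$ and kills $u_{i_1}$. This is trivial to repair, and the rest of the proposal is sound.
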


%-------------------------------------------------------------------------------
%-------------------------------------------------------------------------------
%-------------------------------------------------------------------------------

\section{Proof of Theorem~\ref{thm:A}}\label{sec:2}
In this section, we prove Theorem~\ref{thm:A} by induction on $p_0$.
Assuming that Conjecture~\ref{conj} holds for $p\leq p_0-1$ ($p_0\geq4$), we shall consider the equivalence when $p=p_0$. The difficulty lies in the analysis when there are singularities, namely, one or more pairs of vectors with distance $|x_i-x_j|$ going to zero or infinity. This would be also an obstacle if one tried to prove the equivalence by contradiction. Hence from the inductive viewpoint, we treat with these two cases of singularities in Lemmas \ref{lemma2} and \ref{lemma1} separately.

First, observe that the inequality is homogeneous with respect to $x_1,\dots,x_p$.
Thus, without loss of generality, we may assume that $\x12 = \min\limits_{i\neq j}\x ij=1$, and
$$
	1=\x12\leq\x13\leq\dots\leq\x1p.
$$

%-------------------------------------------------------------------------------

The following Lemma~\ref{lemma1} deals with \textit{unbounded} cases.
\begin{lem}\label{lemma1}%----------------------------------------------------
%	Assume that Conjecture~\ref{conj} holds for $p\leq p_0-1$. When $p=p_0\geq4$, let $2\leq s\leq p-2$.
%	If there exists a sequence of positive numbers $\beta_1,\dots,\beta_{s-1}$ depending only on $p$ and $m$, such that the following condition holds:
	Assume that Conjecture~\ref{conj} holds for $p\leq p_0-1$.
	When $p=p_0\geq4$, let $2\leq s\leq p-2$.
	For any $\beta_1,\dots,\beta_{s-1}\geq1$, there exists $ M_s>0$, such that for any $x_1,\dots,x_s$ satisfying
	\begin{align}
		\x{1}{i+1} \leq \beta_i,	\quad	1\leq i \leq s-1,		\tag{$\star$}\label{eqn:condition}
	\end{align}
	Conjecture~\ref{conj} holds if $\x{1}{s+1}>M_s$.
	
	\begin{comment}
		??设猜想~\ref{conj}在$p\leq p_0-1$时?立. 设$p=p_0\geq4$, $2\leq s\leq p-2$. 如果存在一个???赖于$p,m$的正数列$\beta_1,\dots,\beta_{s-1}$, 满足$\x{1}{i+1} \leq \beta_i$, $i = 1,2,\dots, s-1$, 那么存在$M_s>0$, 使得猜想~\ref{conj} 在$\x{1}{s+1}>M_s$ 时?立.
	\end{comment}
\end{lem}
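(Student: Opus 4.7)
The strategy is a near/far block decomposition combined with the induction hypothesis applied separately to each block. Set $R := \x{1}{s+1}$ and $\beta := \max_k \beta_k$, and partition the indices into the near block $N := \{1,\dots,s\}$ and the far block $F := \{s+1,\dots,p\}$. Since the global ordering gives $\x{1}{j} \ge R$ for every $j \in F$ and $\x{1}{i} \le \beta$ for every $i \in N$, the triangle inequality yields the key geometric estimate $\x{i}{j} \ge R - \beta$ for every cross pair $(i,j) \in N \times F$; thus every cross entry of $A$ has magnitude at most $(R-\beta)^{-1}$.

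First I would split both $I_1$ and $I_2$ into intra-$N$, intra-$F$, and cross contributions. The decomposition $I_2 = I_2^{N} + I_2^{F} + I_2^{\mathrm{cross}}$ is immediate, and the cross-distance estimate gives $I_2^{\mathrm{cross}} \le C_0(R-\beta)^{-2}\|U\|^2$. For $I_1$, I would expand each $S_i := \sum_{j \ne i} u_j/\x{i}{j}$ and each $T_i := u_i \sum_{j \ne i}(x_i - x_j)/\x{i}{j}^3\,u_j$ into an intra-block piece (summation within the same block as $i$) and a cross piece. The elementary inequality $(a+b)^2 \ge \tfrac12 a^2 - b^2$, a Cauchy--Schwarz bound on the cross pieces using the cross-distance estimate, and the trivial lower bound $\sup_i |T_i| \ge \tfrac12\bigl(\sup_{i \in N}|T_i^{N}| + \sup_{i \in F}|T_i^{F}|\bigr)$ up to cross errors then yield
\begin{align*}
I_1 \ge \tfrac{1}{2}\bigl(I_1^{N} + I_1^{F}\bigr) - \tfrac{C_1}{(R-\beta)^2}\|U\|^2,
\end{align*}
where $I_1^{N}, I_2^{N}$ (resp.\ $I_1^{F}, I_2^{F}$) are the same functionals formed from the sub-configuration $\{x_i : i \in N\}$ with weights $(u_i)_{i \in N}$ (resp.\ for $F$).

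Since $2 \le s,\ p-s \le p_0 - 2$, both sub-configurations fall within the induction hypothesis, giving constants $c_s, c_{p-s} > 0$ with $I_1^{N} \ge c_s I_2^{N}$ and $I_1^{F} \ge c_{p-s} I_2^{F}$. With $c_\ast := \min(c_s, c_{p-s})$, I would aim to show $I_1 \ge c' I_2$ for $c' := c_\ast/K$ and a large $K = K(p)$. The near block has diameter at most $2\beta$, so $I_2^{N} \ge (s-1)(2\beta)^{-2}\|u_{N}\|^2$, which absorbs the $\|u_N\|^2$-portion of the $(R-\beta)^{-2}\|U\|^2$ error as soon as $R$ exceeds a threshold depending only on $\beta$, $s$, $c_s$, and $K$; this threshold is what defines the candidate $M_s$.

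The main technical obstacle is the remaining $\|u_F\|^2$-portion of the error, because the far block may have arbitrarily large diameter and hence $I_2^{F}$ cannot in general be bounded below by a positive multiple of $\|u_F\|^2$. My proposed resolution is to \emph{reinstate} the nonnegative cross-squared contributions $\sum_{i \in N}|S_i^{\mathrm{cross}}|^2$ and $\sum_{i \in F}|S_i^{\mathrm{cross}}|^2$ that were discarded when applying $(a+b)^2 \ge \tfrac12 a^2 - b^2$, observing that these carry the \emph{same} $(R-\beta)^{-2}$ scale as the offending $\|u_F\|^2$-error on the $I_2$-side. Pairing the two, together with the inductive bound $I_1^{F} \ge c_{p-s} I_2^{F}$ (which remains useful even when $I_2^F$ is small), should produce the required absorption for $M_s$ sufficiently large in terms of $\beta, s, p, m, c_s, c_{p-s}$. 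Making this matching precise is the technical heart of the argument.
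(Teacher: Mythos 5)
Your overall strategy (split the indices into a near cluster and a far cluster, apply the inductive hypothesis to each, and absorb the cross errors by taking $R:=\x{1}{s+1}$ large) is the right one in spirit, but there is a genuine gap in how the blocks are chosen, and your proposed fix for the $\|u_F\|^2$-error does not close it.

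The paper uses two \emph{overlapping} blocks $\{1,\dots,s+1\}$ and $\{s+1,\dots,p\}$, sharing the bridge index $s+1$, whereas you use the disjoint blocks $N=\{1,\dots,s\}$, $F=\{s+1,\dots,p\}$. The overlap is not cosmetic; it is precisely what prevents the problematic term $u_{s+1}^2/(R-\beta)^2$ from ever appearing in an error term. With the paper's near block $\{1,\dots,s+1\}$, the error from that inductive step involves only $u_j$ for $j>s+1$ (strictly), and the geometric estimates~\eqref{eqn:l1-ineq2}--\eqref{eqn:l1-ineq3} convert each such term into an intra-$\{s+1,\dots,p\}$ pair; that converted error is then dominated (after the $\delta$-weighted combination) by the far-block inductive lower bound. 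Meanwhile the far-block error only involves $u_j$ with $j\le s$, which is absorbed by the near-block $I_2$ because the near block has diameter at most $2\alpha$. With disjoint blocks, however, the error from applying induction to $N$ (and also $I_2^{\mathrm{cross}}$) contains terms like $u_{s+1}^2/\x{i}{s+1}^2\sim u_{s+1}^2/(R-\beta)^2$ for $i\le s$. These cannot be bounded by intra-$F$ pairs: if $x_{s+2},\dots,x_p$ lie at distance $T\gg R$ from $x_{s+1}$, then every intra-$F$ distance involving $x_{s+1}$ is of order $T$, so $I_2^F\sim \|u_F\|^2/T^2$ is negligibly small compared to $\|u_F\|^2/(R-\beta)^2$, and taking $R$ large does not help because both sides scale as $(R-\beta)^{-2}$ only on the error side, not on $I_2^F$.

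Your proposed repair -- reinstating the discarded positive cross contributions $\sum_i |S_i^{\mathrm{cross}}|^2$ -- does not fix this. The quantities $S_i^{\mathrm{cross}}=\sum_{j\in F}u_j/\x{i}{j}$ for $i\in N$ can vanish identically (the weights $1/\x{i}{j}$ are nearly constant across $i\in N$ when $R$ is large, so one can choose $u_F$ with $\sum_{j\in F}u_j\approx 0$ making all $S_i^{\mathrm{cross}}$ nearly zero) even while $\|u_F\|$ is of order $1$, so a lower bound for $\sum_i|S_i^{\mathrm{cross}}|^2$ in terms of $\|u_F\|^2/(R-\beta)^2$ is simply false. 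The matching of scales you observe is necessary but not sufficient; a pointwise (in $U$) domination is required, and it fails. To repair the argument you should replace the near block $\{1,\dots,s\}$ with $\{1,\dots,s+1\}$ so that $s+1$ appears in both blocks, carry out the geometric conversion as in \eqref{eqn:l1-ineq1}--\eqref{eqn:l1-ineq3} so that the residual near-block error is a multiple of $\sum_{i,j\ge s+1}u_j^2/\x{i}{j}^2$, and then take the weighted combination of the two inductive estimates as in \eqref{eqn:l1-I1-8}; this is exactly what the paper does.
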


\begin{proof}
%	Let $\beta_1,\dots,\beta_{s-1}$ be such a sequence, then $\x{1}{i+1} \leq \beta_i$, for $i = 1,2,\dots, s-1$.
	Fix any $\beta_1,\dots,\beta_{s-1}\geq1$, then $\x{1}{i+1} \leq \beta_i$, for $i = 1,2,\dots, s-1$.
	Let $\alpha = \beta_{s-1}$, then $1\leq\x{1}{s}\leq \alpha$.
	In this proof, we consider only the case $\x1{s+1}\geq2\alpha$.

%-------------------------------------------------------------------------------
	
%\paragraph{一些?等?}
%	用上??的??件和三角?等?, 立????以得到一些简??的?等?, 它们在????的?明中会??挥??作用.
	Firstly, we give some important inequalities that would be useful in the following proof.
	For $j\leq s,i\geq s+1$,
	\begin{align}\label{eqn:l1-ineq1}
				|x_i-x_j|
		&\geq	|x_1-x_i| - |x_1-x_j| \notag\\	
		&\geq	2\alpha - |x_1-x_j| \notag\\
		&\geq	2|x_1-x_j|-|x_1-x_j|	\notag\\
		&=	|x_1-x_j|;
	\end{align}	
	\begin{align}\label{eqn:l1-ineq2}
				|x_i-x_{s+1}|
		&\leq	|x_i-x_1| + |x_1-x_{s+1}| \nonumber\\
		&\leq	2|x_i-x_1| \nonumber\\
		&\leq	2|x_i-x_j| + 2|x_j-x_1| \nonumber\\
		&\leq	4|x_i-x_j|;
	\end{align}
	\begin{align}\label{eqn:l1-ineq3}
		|x_j-x_{s+1}| \leq 5|x_i-x_j|.
	\end{align}

%-------------------------------------------------------------------------------
	
	Now, we deal with $I_1$.
	%\paragraph{把$I_1$放缩至?$s+1$项}
	%\subparagraph{$I_1$第一项}
	%?明的第一步是将$I_1$放缩至?$s+1$项.
	Since $\left(a+2b\right)^2 = a^2+4ab+4b^2 = 2[(a+b)^2-(a^2/2-b^2)]\geq0$, we have
	\begin{align}\label{eqn:l1-I1-1}
				\sum_{1\leq i\leq p} \left| \sum_{j\neq i} \frac{u_j}{|x_i-x_j|} \right|^2
		&\geq	\sum_{i\leq s+1} \left| \sum_{j\neq i} \frac{u_j}{|x_i-x_j|} \right|^2 \nonumber\\
		&\geq	\frac12 \sum_{i\leq s+1} \left| \sum_{j\neq i,j\leq s+1} \frac{u_j}{|x_i-x_j|} \right|^2
			-\sum_{i\leq s+1} \left| \sum_{j>s+1} \frac{u_j}{|x_i-x_j|} \right|^2.
	\end{align}
	%\subparagraph{$I_1$第二项}
	Since
	$$
		\sup_{i\leq s+1} |a_i+b_i|
		\geq |a_k+b_k|
		\geq |a_k|-|b_k|
		\geq |a_k| - \sup_{i\leq s+1}|b_i|,\quad \forall k\leq s+1,
	$$
	we have
	$$
		\sup_{i\leq s+1} |a_i+b_i| \geq \sup_{i\leq s+1}|a_i| -\sup_{i\leq s+1}|b_i|.
	$$
	Thus,
	\begin{align}\label{eqn:l1-sup-1}
		&		2\sup_{1\leq i\leq p} \left|\sum_{j\neq i} u_i \frac{x_i-x_j}{\x ij^3} u_j \right| \notag\\
		&\geq	2\sup_{1\leq i\leq s+1} \left|\sum_{j\neq i} u_i \frac{x_i-x_j}{\x ij^3} u_j \right| \notag\\
		&\geq	2\sup_{1\leq i\leq s+1} \left|\sum_{j\neq i, j\leq s+1} u_i \frac{x_i-x_j}{\x ij^3} u_j \right|
				-2\sup_{1\leq i\leq s+1} \left|\sum_{j\neq i, j>s+1} u_i \frac{x_i-x_j}{\x ij^3} u_j \right|.
	\end{align}
	%\subparagraph{$I_1$放缩$I_1'$}
	By \eqref{eqn:l1-I1-1} and \eqref{eqn:l1-sup-1}, we have
	\begin{align*}%\label{eqn:l1-I1-2}
		I_1 \geq \frac12 \sum_{i\leq s+1} \left| \sum_{j\neq i,j\leq s+1} \frac{u_j}{|x_i-x_j|} \right|^2
		+2\sup_{1\leq i\leq s+1} \left|\sum_{j\neq i, j\leq s+1} u_i \frac{x_i-x_j}{\x ij^3} u_j \right| \nonumber\\
		- \sum_{i\leq s+1} \left| \sum_{j>s+1} \frac{u_j}{|x_i-x_j|} \right|^2
		-2\sup_{1\leq i\leq s+1} \left|\sum_{j\neq i, j>s+1} u_i \frac{x_i-x_j}{\x ij^3} u_j \right|.
	\end{align*}
%-------------------------------------------------------------------------------

	Since $s+1\leq p-1$, by the inductive assumption, there exists constant $c_1=c_1(p,m)>0$, such that 	
%	?用第二数学归纳法，??设猜想~\ref{conj} 对$p'\leq p-1$ ?立， 由于$s+1<p$, 故存在常数$c_1$,使得
	\begin{align}\label{eqn:l1-I1-3}
		&I_1 \geq c_1 \sum_{i,j\leq s+1,j\neq i}\frac{u_j^2}{|x_i-x_j|^2} \nonumber\\
		& - \sum_{i\leq s+1} \left| \sum_{j>s+1} \frac{u_j}{|x_i-x_j|} \right|^2
		-2\sup_{1\leq i\leq s+1} \left|\sum_{j\neq i, j>s+1} u_i \frac{x_i-x_j}{\x ij^3} u_j \right|.%=:I_1'
	\end{align}
	
	Consider the middle term in \eqref{eqn:l1-I1-3}, by the special case of the power means inequality, we have
	$$
				\frac{1}{p-s-1}\sum_{j>s+1} \frac{u_j}{|x_i-x_j|}
		\leq	\sqrt{	\frac{1}{p-s-1}	\sum_{j>s+1}\frac{u^2_j}{|x_i-x_j|^2}	},
	$$
	thus,
	\begin{align}\label{eqn:l1-10}
				\sum_{i\leq s+1} \left| \sum_{j>s+1} \frac{u_j}{|x_i-x_j|} \right|^2
		\leq 	(p-s-1)\sum_{j>s+1,i\leq s+1}\frac{u^2_j}{|x_i-x_j|^2}.
	\end{align}
	
	%\subparagraph{考察$I_1'$的第二个?项}
	Consider the last term in \eqref{eqn:l1-I1-3}.
	When $i=s+1$, then
	\begin{align}\label{eqn:l1-sup-3}
				2\left|\sum_{j>s+1} u_{s+1} \frac{x_{s+1}-x_j}{|x_{s+1}-x_j|^3} u_j \right|
		&\leq	\sum_{j>s+1} \frac{2|u_{s+1}u_j|}{|x_{s+1}-x_j|^2} \nonumber\\
		&\leq	\sum_{j>s+1} \frac{u_{s+1}^2}{|x_{s+1}-x_j|^2} + \sum_{j>s+1} \frac{u_j^2}{|x_{s+1}-x_j|^2}.
	\end{align}	
	When $i\leq s$, since $\x ij\geq1$,
	\begin{align}\label{eqn:l1-sup-4} %\label{eqn:l1-sup-2}
				2\left|\sum_{j>s+1} u_i \frac{x_i-x_j}{|x_i-x_j|^3} u_j \right|	
		\leq	\sum_{j>s+1}\frac{ 2|u_iu_j| }{|x_i-x_j|^2}
		\leq	\sum_{j>s+1} \frac{2|u_i||u_j|}{|x_i-x_j|}.
	\end{align}
	%
	%由平??值?等?, 有
	By the inequality of arithmetic and geometric means,
	\begin{align*}
		\frac 1a |u_i|^2 + a\frac{|u_j|^2}{|x_i-x_j|^2} \geq \frac{2|u_i||u_j|}{|x_i-x_j|},
	\end{align*}
	taking $a=(p-s-1)\dfrac{2\alpha^2}{c_1}>0$, we get
	\begin{align}\label{eqn:l1-sup-5}
				\sum_{j>s+1} \frac{2|u_i||u_j|}{|x_i-x_j|}
		\leq	\sum_{j>s+1} \frac 1a u_i^2 + \sum_{j>s+1}a\frac{u_j^2}{|x_i-x_j|^2} %\nonumber\\
		\leq	\frac{c_1}{2\alpha^2} u_i^2 + a\sum_{j>s+1,i\leq s} \frac{u_j^2}{|x_i-x_j|^2}.
	\end{align}
	Now take $i_0: 1\leq i_0\leq s$, such that $u_{i_0}^2=\max_{1\leq i\leq s} u^2_i$.
	If $i_0=1$, we take the term $i=2,j=1$ in the sum $\frac12 c_1\sum_{i,j\leq s+1,j\neq i}\frac{u_j^2}{|x_i-x_j|^2}$, then
	\begin{align*}
		\frac12 c_1\sum_{i,j\leq s+1,j\neq i}\frac{u_j^2}{|x_i-x_j|^2}
		\geq\frac{c_1u_1^2}{2|x_2-x_1|^2}
		\geq\frac{c_1}{2\alpha^2} u_{i_0}^2.
	\end{align*}
	If $2\leq i_0\leq s$, we take the term $i=1,j=i_0$, then
	\begin{align*}
		\frac12 c_1\sum_{i,j\leq s+1,j\neq i}\frac{u_j^2}{|x_i-x_j|^2}
		\geq\frac{c_1u_{i_0}^2}{2|x_{i_0}-x_1|^2}
		\geq\frac{c_1}{2\alpha^2} u_{i_0}^2.
	\end{align*}
	In either case, we have
	\begin{align}\label{eqn:l1-1}
		\frac{c_1}{2\alpha^2} u_{i_0}^2
		\leq	\frac12 c_1\sum_{i,j\leq s+1,j\neq i}\frac{u_j^2}{|x_i-x_j|^2}
	\end{align}
	By \eqref{eqn:l1-sup-3},\eqref{eqn:l1-sup-4},\eqref{eqn:l1-sup-5} and \eqref{eqn:l1-1}, the last term in \eqref{eqn:l1-I1-3} has the following upper bound:
	\begin{align}\label{eqn:l1-sup-6}
		&2\sup_{1\leq i\leq s+1} \left|\sum_{j\neq i, j>s+1} u_i \frac{x_i-x_j}{\x ij^3} u_j \right| \notag\\
		&\leq \frac12 c_1\sum_{i,j\leq s+1,j\neq i}\frac{u_j^2}{|x_i-x_j|^2}
		+ c_2\sum_{j>s+1,i\leq s+1}\frac{u^2_j}{|x_i-x_j|^2}
		+ \sum_{j>s+1}\frac{u_{s+1}^2}{|x_{s+1}-x_j|^2},
	\end{align}
	where $c_2=\max\{1,a\}>0$.
	
%-------------------------------------------------------------------------------

	%\subparagraph{代入$I_1'$}
	By $\eqref{eqn:l1-I1-3}$, \eqref{eqn:l1-10} and $\eqref{eqn:l1-sup-6}$, we have
	\begin{align}\label{eqn:l1-I1-4}
		I_1
		&\geq	\frac{c_1}2 \sum_{i,j\leq s+1,j\neq i}\frac{u_j^2}{|x_i-x_j|^2} \notag\\
		&		-\left( (p-s-1+c_2)\sum_{j>s+1,i\leq s+1} \frac{u^2_j}{|x_i-x_j|^2} + \sum_{j>s+1}\frac{|u_{s+1}|^2}{|x_{s+1}-x_j|^2} \right)	\nonumber\\
		&\geq	\frac{c_1}{2} \sum_{i,j\leq s+1,j\neq i} \frac{u_j^2}{|x_i-x_j|^2}
		-c_3 \left( \sum_{j>s+1,i\leq s+1} \frac{u_j^2}{|x_i-x_j|^2} + \sum_{j>s+1} \frac{u_{s+1}^2}{|x_{s+1}-x_j|^2} \right),
	\end{align}
		where $c_3=\max\{p-s-1+c_2,1\}=p-s-1+c_2\geq2$.

%-------------------------------------------------------------------------------

	Now, consider the minus term in $\eqref{eqn:l1-I1-4}$.
	Rewrite and then use inequalities \eqref{eqn:l1-ineq1}, \eqref{eqn:l1-ineq2} and \eqref{eqn:l1-ineq3}:
	\begin{align*}
		&		\sum_{j>s+1,i\leq s+1} \frac{u_j^2}{|x_i-x_j|^2}
				+\sum_{j>s+1} \frac{u_{s+1}^2}{|x_{s+1}-x_j|^2} \\
		&=		\sum_{j>s+1,i\leq s} \frac{u_j^2}{|x_i-x_j|^2}
				+\sum_{j>s+1,i=s+1} \frac{u_j^2}{|x_i-x_j|^2}
				+\sum_{i>s+1,j=s+1} \frac{u_j^2}{|x_i-x_j|^2} \\
		&\leq	16\sum_{j>s+1,i\leq s} \frac{u_j^2}{|x_{s+1}-x_j|^2}
				+\sum_{j>s+1,i=s+1} \frac{u_j^2}{|x_i-x_j|^2}
				+\sum_{i>s+1,j=s+1} \frac{u_j^2}{|x_i-x_j|^2} \\
		&=		(16s+1) \sum_{j>s+1,i=s+1} \frac{u_j^2}{|x_i-x_j|^2}
				+ \sum_{i>s+1,j=s+1} \frac{u_j^2}{|x_i-x_j|^2} \\
		&\leq	(16s+1) \sum_{i,j\geq s+1,i\neq j} \frac{u_j^2}{|x_{i}-x_j|^2}.
	\end{align*}
	%16s+2????
	Set $c_4=(16s+1)c_3>0$, then $\eqref{eqn:l1-I1-4}$ becomes
	\begin{align}\label{eqn:l1-I1-7}
	I_1 \geq \frac{c_1}{2} \sum_{i,j\leq s+1,j\neq i} \frac{u_j^2}{|x_i-x_j|^2} -c_4 \sum_{i,j\geq s+1,j\neq i} \frac{u_j^2}{|x_i-x_j|^2}.
	\end{align}
	
%-------------------------------------------------------------------------------

	%\paragraph{把$I_1$放缩至??$p−s−1$项}	
	%至此, ?明的第一步就完?了, 第二步是将$I_1$放缩至??$p-s-1$项.
	
	By similar argument as in \eqref{eqn:l1-I1-1} and \eqref{eqn:l1-sup-1}, we have
	\begin{align*}%\label{eqn:l1-I1-5}
		I_1 \geq \frac12 \sum_{i>s} \left| \sum_{j\neq i,j>s} \frac{u_j}{|x_i-x_j|} \right|^2
		+2\sup_{i>s} \left|\sum_{j>s, j\neq i} u_i \frac{x_i-x_j}{\x ij^3} u_j \right| \nonumber\\
		- \sum_{i>s} \left| \sum_{j\leq s} \frac{u_j}{|x_i-x_j|} \right|^2
		-2\sup_{i>s} \left|\sum_{j\leq s} u_i \frac{x_i-x_j}{\x ij^3} u_j \right|.
	\end{align*}
	By the inductive assumption, there exists $c_5>0$ such that
	\begin{align}\label{eqn:l1-I1-9}
		I_1 \geq c_5 \sum_{i,j>s,j\neq i} \frac{u_j^2}{|x_i-x_j|^2} %\nonumber\\
		- \sum_{i>s} \left| \sum_{j\leq s} \frac{u_j}{|x_i-x_j|} \right|^2
		-2\sup_{i>s} \left|\sum_{j\leq s} u_i \frac{x_i-x_j}{\x ij^3} u_j \right|.
	\end{align}

	We claim that given any $\varepsilon>0$, there exists $b_\varepsilon\geq1$, such that
	\begin{align}\label{eqn:l1-I2-1}
		\sum_{i>s} \left| \sum_{j\leq s} \frac{u_j}{|x_i-x_j|} \right|^2 + 2\sup_{i>s} \left|\sum_{j\leq s} u_i \frac{x_i-x_j}{\x ij^3} u_j \right|
		\leq \varepsilon I_2
	\end{align}
	when $\x ij>b_\varepsilon$ for $i>s,j\leq s$.
	
	In fact, by homogeneity we can assume $|u_i|\leq1$ for all $i$ and thus (\ref{eqn:l1-I2-1}) can be estimated as follows: the left hand is
\begin{align*}
& \sum_{i>s} \left| \sum_{j\leq s} \frac{u_j}{|x_i-x_j|} \right|^2 + 2\sup_{i>s} \left|\sum_{j\leq s} u_i \frac{x_i-x_j}{\x ij^3} u_j \right| \nonumber\\
&\leq \sum_{i>s} \Big(s \sum_{j\leq s} \frac{u_j^2}{|x_i-x_j|^2} +2s\sum_{j\leq s} \frac{u_j^2}{|x_i-x_j|^2}\Big) \nonumber\\
&\leq \frac{3s(p-s+1)}{b_{\varepsilon}^2} \sum_{j\leq s}u_j^2,
\end{align*}
while the right hand is
\begin{align*}
I_2\geq \sum_{j\leq s} \Big(\sum_{i\neq j}\frac{1}{|x_i-x_j|^2}\Big)u_j^2\geq \frac{1}{4\alpha^2}\sum_{j\leq s}u_j^2.
\end{align*}
Therefore, setting $$b_{\varepsilon}=\sqrt{\frac{3s(p-s+1)4\alpha^2}{\varepsilon}},$$
we get the required inequality (\ref{eqn:l1-I2-1}).

%if $u_j=0$, $\forall j:1\leq j\leq s$, then $LHS=0\leq I_2$;
%	otherwise $\exists j:1\leq j\leq s$, $u_j\neq 0$, then $I_2 \geq \frac{u_j^2}{\alpha^2}>0$.
%	Given any $b>0$, consider the case $\x ij>b$ for $i>s,j\leq s$.
%	Then $\frac1{\x ij}\to0$, as $b\to+\infty$, thus
%	$$
%		\sum_{i>s} \left|\sum_{j\leq s}\frac{u_j}{|x_i-x_j|}\right|^2	+2\sup_{i>s} \left|	\sum_{j\leq s} u_i \frac{x_i-x_j}{\x ij^3}u_j	\right|	\to0,	\quad b\to+\infty.
%	$$
%	%The claim follows from the definition of limit.
%	
%	Now, let $M_s(\varepsilon) = (b_\varepsilon+1)\alpha \geq 2\alpha$.
%	In the rest of the proof, we consider only the case $\x1{s+1}>M_s(\varepsilon)$.
%	Check that
%	\begin{align*}%\label{eqn:l1-si1}
%				|x_i-x_j|	
%		&\geq	|x_1-x_i|-|x_1-x_j| \notag\\
%		&\geq	|x_1-x_{s+1}|-|x_1-x_s| \notag\\	
%		&\geq	\(\frac{M_s(\varepsilon)}{\alpha}-1\) |x_1-x_s| \notag\\
%		&\geq	\frac{M_s(\varepsilon)}{\alpha}-1=b_\varepsilon,
%		%&\geq	(\beta_s-1) |x_1-x_s| \notag\\
%		%&\geq	\beta_s-1.
%		\quad 	i>s,j\leq s.
%	\end{align*}
%	Thus, the inequality \eqref{eqn:l1-I2-1} holds.
	
	By \eqref{eqn:l1-I1-9} and \eqref{eqn:l1-I2-1},
	\begin{align}\label{eqn:l1-I1-6}
		I_1	\geq c_5 \sum_{i,j>s,j\neq i} \frac{u_j^2}{|x_i-x_j|^2} -\varepsilon I_2.
	\end{align}

%-------------------------------------------------------------------------------

	%\paragraph{$I_1$的???加起??}
	%?明的第三步
	Now, let $\delta=\frac{c_5}{2c_4}$, then $\delta\cdot\eqref{eqn:l1-I1-7}+\eqref{eqn:l1-I1-6}:$
	\begin{align}\label{eqn:l1-I1-8}
				(1+\delta)I_1
		&\geq	\frac{c_1\delta}2 \sum_{i,j\leq s+1,j\neq i} \frac{u_j^2}{|x_i-x_j|^2} + \frac{c_5}2 \sum_{i,j>s,j\neq i} \frac{u_j^2}{|x_i-x_j|^2} -\varepsilon I_2 \nonumber\\
		&\geq	c_6\( \sum_{i,j\leq s+1,j\neq i} \frac{u_j^2}{|x_i-x_j|^2} + \sum_{i,j>s,j\neq i} \frac{u_j^2}{|x_i-x_j|^2}\)  -\varepsilon I_2 .
	\end{align}
	where $c_6=\min\left\{\frac{c_1\delta}{2},\frac{c_5}2\right\}>0.$

	By \eqref{eqn:l1-ineq2} and \eqref{eqn:l1-ineq3},
	\begin{align*}
				\sum_{j>s+1,i\leq s} \frac{u_j^2}{|x_i-x_j|^2}
		\leq	16s \sum_{j>s+1} \frac{u_j^2}{|x_j-x_{s+1}|^2}
		\leq	16s \sum_{i,j>s,i\neq j} \frac{u_j^2}{|x_i-x_j|^2},
	\end{align*}
	\begin{align*}
				\sum_{j\leq s,i>s+1} \frac{u_j^2}{|x_i-x_j|^2} 	
		&\leq	25 \sum_{j\leq s,i>s+1} \frac{u_j^2}{|x_j-x_{s+1}|^2} \\
		&=		25(p-s-1) \sum_{j\leq s} \frac{u_j^2}{|x_j-x_{s+1}|^2}	\\
		&\leq	25(p-s-1) \sum_{i,j\leq s+1,i\neq j} \frac{u_j^2}{|x_i-x_j|^2}.
	\end{align*}
	Thus,
	\begin{align}\label{eqn:l1-I2-2}
				I_2
		&\leq	(16s+1) \sum_{i,j>s} \frac{u_j^2}{|x_i-x_j|^2} + \(25(p-s-1)+1\)\sum_{i,j\leq s+1,i\neq j} \frac{u_j^2}{|x_i-x_j|^2}\nonumber\\
		&\leq	c_7 \(\sum_{i,j>s} \frac{u_j^2}{|x_i-x_j|^2} + \sum_{i,j\leq s+1,i\neq j} \frac{u_j^2}{|x_i-x_j|^2}\),
	\end{align}
	where $c_7=\max\left\{16s+1,25(p-s-1)+1\right\}>0$.

	For $\varepsilon=\dfrac{c_6}{2c_7}$, choose $M_s=M_s(\varepsilon)\geq2\alpha$.
	By \eqref{eqn:l1-I1-8} and \eqref{eqn:l1-I2-2}, we have
	\begin{align*}%\label{eqn:l1-I1I2}
		I_1	\geq \frac{c_6}{2(1+\delta)c_7}I_2=cI_2,
	\end{align*}
	where $c=\dfrac{c_6}{2(1+\delta)c_7}>0$ depends only on $p$ and $m$.
\end{proof}

Now, let $s=2$ and take $\beta_1=1$.
Then by Lemma~\ref{lemma1}, there exists $M_2>0$ such that Conjecture~\ref{conj} holds if \eqref{eqn:condition} holds (automatically) and $\x13>M_2$.
Then it suffices to consider the case $\x13\leq M_2$.

Similarly, let $s=3$ and take $\beta_2=M_2$. Then by Lemma~\ref{lemma1}, there exists $M_3>0$ such that Conjecture~\ref{conj} holds if \eqref{eqn:condition} holds and $\x14>M_3$.
Then it suffices to consider the case $\x14\leq M_3$.

Continuing the process repeatedly, we reach $s=p-2$ and have a sequence $\beta_1,\dots,\beta_{p-3}$.
Again, by Lemma~\ref{lemma1}, there exists $M_{p-2}>0$ such that Conjecture~\ref{conj} holds if \eqref{eqn:condition} holds and $\x1{p-1}>M_{p-2}$.
It suffices to consider the case $\x1{p-1}\leq M_{p-2}$.

Therefore, from now on, we may assume that
$$
	1\leq|x_i-x_j|\leq\beta,	\quad	1\leq i,j\leq p-1.
$$

%------------------------------------------------------------------------------

Given $x_1,\dots,x_{p-1},$ let $c_1(p,m,x_1,\dots,x_{p-1})$ denote the infimum of
\begin{align*}
	&\left(
		\sum_{\substack{i,j\leq p-1\\i\neq j}} \frac{u_j^2}{|x_i-x_j|^2} + \tilde{u}_p^2	\right)^{-1}
	\\
	&\times\left(
		\sum_{i\leq p-1} \left|\sum_{j\leq p-1,j\neq i,} \frac{u_j}{|x_i-x_j|}+\tilde{u}_p\right|^2	+2\sup_{i\leq p-1}  \left|u_i \sum_{j\leq p-1,j\neq i} \frac{x_i-x_j}{|x_i-x_j|^3}u_j\right|
	\right),
\end{align*}
where $u^2_1+\dots+u^2_{p-1}+\tilde{u}_p^2=1$.
%Let $c(p,m,x_1,\dots,x_p)$ denote the infimum of
%\begin{align*}
%	\left( \sum_{\substack{i,j\\i\neq j}} \frac{u_j^2}{|x_i-x_j|^2}\right)^{-1}%
%	\times\left(k
%	\sum_{i\leq p} \left|\sum_{j\leq p,j\neq i,} \frac{u_j}{|x_i-x_j|}\right|^2
%	+ 2\sup_{i\leq p}  \left|u_i \sum_{j\leq p,j\neq i} \frac{x_i-x_j}{|x_i-x_j|^3}u_j\right|
%	\right)
%\end{align*}
%where $u^2_1+\dots+u^2_{p}=1$.

\begin{lem}\label{lemma2}
	Assume Conjecture~\ref{conj} holds for $p\leq p_0-1$.
	If $c_1(p,m,x_1,\dots,x_{p-1})>0$ holds for any $x_1,\dots,x_{p-1}$, then Conjecture~\ref{conj} is valid for $p=p_0$.
\end{lem}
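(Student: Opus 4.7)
The plan is to argue by contradiction. Assume Conjecture~\ref{conj} fails at $p=p_0$: there exist sequences of configurations $x^{(n)}=(x_1^{(n)},\dots,x_p^{(n)})$ and vectors $U^{(n)}\in\mathbb R^p$ with $I_1^{(n)}/I_2^{(n)}\to 0$. By the scale invariance of \eqref{eqn:rewrite}, I would normalize $\min_{i\neq j}|x_i^{(n)}-x_j^{(n)}|=|x_1^{(n)}-x_2^{(n)}|=1$ and $I_2(x^{(n)},U^{(n)})=1$, so that $I_1(x^{(n)},U^{(n)})\to 0$ and $U^{(n)}$ stays bounded. Iterating Lemma~\ref{lemma1} exactly as in the paragraph immediately above, I may further assume $|x_i^{(n)}-x_j^{(n)}|\leq\beta$ for all $i,j\leq p-1$ and some uniform $\beta\geq 1$.

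The only remaining direction for degeneracy is $x_p^{(n)}$. After passing to a subsequence, I distinguish two subcases. In subcase (A), $|x_1^{(n)}-x_p^{(n)}|\to\infty$; this is precisely what the hypothesis $c_1>0$ is designed for. The natural substitution is $\tilde u_p^{(n)}:=u_p^{(n)}/|x_1^{(n)}-x_p^{(n)}|$ (with a possible $p$-dependent normalization constant). Expanding $1/|x_i^{(n)}-x_p^{(n)}|$ and $(x_i^{(n)}-x_p^{(n)})/|x_i^{(n)}-x_p^{(n)}|^3$ in powers of $1/|x_1^{(n)}-x_p^{(n)}|$, the leading terms reproduce, up to $o(1)$ errors, exactly the numerator and denominator appearing in the definition of $c_1(p,m,x_1^{(n)},\dots,x_{p-1}^{(n)})$. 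The reduced configurations $(x_1^{(n)},\dots,x_{p-1}^{(n)})$ live, modulo similarities of $\mathbb R^m$, in the compact set cut out by $1\leq|x_i-x_j|\leq\beta$, on which the continuous function $c_1$ is pointwise positive by hypothesis and hence attains a uniform positive minimum $c^*>0$. This contradicts $I_1^{(n)}/I_2^{(n)}\to 0$.

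In subcase (B), $|x_1^{(n)}-x_p^{(n)}|$ stays bounded, so the full sequence lives in a compact region of $(\mathbb R^m)^p$ with pairwise distances in some interval $[1,R]$. By compactness, $x^{(n)}\to\bar x$ with all $\bar x_i$ distinct and, extracting further, $U^{(n)}\to\bar U\neq 0$. Continuity of $I_1,I_2$ forces $I_1(\bar x,\bar U)=0$ and $I_2(\bar x,\bar U)=1$, so $\bar U$ is a nontrivial solution of \eqref{eqn:equiv1} and \eqref{eqn:equiv2} at $\bar x$. To contradict $c_1(p,m,\bar x_1,\dots,\bar x_{p-1})>0$, I would deform $\bar x_p$ along a generic ray toward infinity and use subcase (A) on the deformed configurations to produce a uniform positive lower bound on $\inf_{|U|=1}I_1/I_2$ for large deformation parameter; sharpening the asymptotic matching from subcase (A) then allows the null vector $\bar U$ to be transferred back to a nontrivial element of the reduced null space at $(\bar x_1,\dots,\bar x_{p-1})$.

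The main obstacle is subcase (B): converting the pointwise degeneracy of a $p$-point problem into an honest $(p-1)$-point reduced degeneracy that actually contradicts the hypothesis $c_1>0$ is the delicate step, because the substitution $\tilde u_p=u_p/|x_1-x_p|$ that governs subcase (A) is only asymptotic and must be refined to a quantitative statement before it can be applied at finite deformation. Subcase (A) is, by comparison, routine estimation once the correct rescaling has been identified and the uniform positivity of $c_1$ on the compact reduced configuration space has been invoked.
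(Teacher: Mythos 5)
Your dichotomy on $|x_1-x_p|$ after invoking Lemma~\ref{lemma1} to pin down $x_1,\dots,x_{p-1}$, and your treatment of subcase (A), are in the same spirit as the paper's first half: the paper likewise sets $\tilde u_p = u_p/|x_1-x_p|$ and reduces to the uniform positivity of $c_1$ on the compact reduced configuration space via continuity, although it carries this out as a direct quantitative estimate (fixing an explicit threshold $\beta'$) rather than along a sequence.

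The gap is exactly where you flag it, in subcase (B), and your proposed remedy would not close it. Deforming $\bar x_p$ along a generic ray and invoking subcase (A) at large deformation parameter gives no information about the fixed limit configuration $\bar x$: a null vector of $(\bar x,\bar U)$ has no reason to persist under the deformation, and the ``asymptotic matching'' you would need to sharpen is only an $o(1)$ statement, not an identity. What the paper uses instead is an \emph{exact} algebraic change of variables, namely the Kelvin transform (inversion). After translating so that $\bar x_p=0$, one sets $t_i=\bar x_i/|\bar x_i|^2$ and $v_i=u_i/|\bar x_i|$ for $i\le p-1$, and $v_p=u_p$. The point $\bar x_p$ is sent to infinity, and a short computation (recorded in greater generality as Proposition~\ref{prop:invariant} and Corollary~\ref{cor1}) shows that the $p$-point system \eqref{eqn:l2-iden1}--\eqref{eqn:l2-iden2} for $(u_1,\dots,u_p)$ transforms precisely into the $(p-1)$-point-plus-infinity system that defines $c_1(p,m,t_1,\dots,t_{p-1})$. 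The hypothesis $c_1>0$ then forces $v_i=0$ for all $i$, hence $U=0$, giving the contradiction in one exact step. So the missing ingredient is not a refined perturbative estimate but the inversion, which converts the bounded degenerate configuration into the reduced problem identically rather than asymptotically.
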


\begin{proof}	
	%因为$c_1(p,m,x_1,\dots,x_{p-1})$是关于$x_1,\dots,x_{p-1}$的连续函数, 于是存在???赖$p,m$的正数$c_1(p,m)$使得
	Since $c_1(p,m,x_1,\dots,x_{p-1})$ is continuous relative to $x_1,\dots,x_{p-1}$, there exists positive number $c_1(p,m)$ depending only on $p,m$ such that
	\begin{align*}
		c_1(p,m,x_1,\dots,x_{p-1}) \geq 2c_1(p,m)>0.
	\end{align*}
	Thus,
	\begin{align}\label{eqn:l2-I1-1}
		&		\sum_{i\leq p-1} \left| \sum_{j\leq p-1,j\neq i} \frac{u_j}{\x ij} +\tilde{u_p}	\right|^2 + 2\sup_{i\leq p-1} \left| u_i\sum_{j\leq p-1,j\neq i}\frac{x_i-x_j}{\x ij ^3}u_j	\right| \notag\\
		&\geq	2c_1(p,m) \( \sum_{i,j\leq p-1,j\neq i} \frac{u_j^2}{\x ij^2}+\tilde{u_p}^2	\),
	\end{align}
	for $ u_1^2+\dots+u_{p-1}^2+\tilde{u_p}^2=1$.
	Let $u_p=\tilde{u_p}\x 1p$.
	
%-------------------------------------------------------------------------------

	Similar to the argument as in the proof of Lemma~\ref{lemma1}, we have
	\begin{align}\label{eqn:l2-I1-2}
				\sum_{i\leq p} \left| \sum_{j\leq p,j\neq i}\frac{u_j}{\x ij}	\right|^2
		&\geq	\sum_{i\leq p-1} \left|	\(\sum_{j\leq p-1,j\neq i} \frac{u_j}{\x ij} +\tilde{u_p}\)	+ \(\frac{u_p}{\x ip} -\tilde{u_p}\) 	\right|^2 \notag\\
		&\geq	\frac12\sum_{i\leq p-1} \left| \sum_{j\leq p-1,j\neq i}\frac{u_j}{\x ij} +\tilde{u_p}	\right|^2
		-2\sum_{j\leq p-1} \left| \frac{\x 1p}{\x jp} -1\right|^2 \tilde{u_p}^2,
	\end{align}
	and
	\begin{align}\label{eqn:l2-I1-3}
				\sup_{i\leq p} \left| u_i\sum_{j\leq p,j\neq i}\frac{x_i-x_j}{\x ij^3}u_j \right|
		&\geq	\sup_{i\leq p-1} \left| u_i\sum_{j\leq p-1,j\neq i}\frac{x_i-x_j}{\x ij^3}u_j + u_i\frac{x_i-x_p}{\x ip^3}u_p\right| \notag\\
		&\geq	\sup_{i\leq p-1} \left| u_i\sum_{j\leq p-1,j\neq i}\frac{x_i-x_j}{\x ij^3}u_j	\right| - \sup_{i\leq p-1} \frac{\x 1p}{\x ip^2}|\tilde{u_p}u_i| \notag\\
		&\geq	\sup_{i\leq p-1} \left| u_i\sum_{j\leq p-1,j\neq i}\frac{x_i-x_j}{\x ij^3}u_j	\right| - \sum_{j\leq p-1} \frac{\x 1p}{\x ip^2}|\tilde{u_p}u_i|.
	\end{align}
	Thus, by \eqref{eqn:l2-I1-1}, \eqref{eqn:l2-I1-2} and \eqref{eqn:l2-I1-3},
	\begin{align}\label{eqn:l2-I1-4}
		&		\sum_{i\leq p} \left| \sum_{j\leq p,j\neq i} \frac{u_j}{|x_i-x_j|} \right|^2
				+2\sup_{i\leq p} \left| u_i \sum_{j\neq i} \frac{x_i-x_j}{|x_i-x_j|^3}u_j \right| \nonumber\\
		&\geq	\frac12 \sum_{i\leq p-1} \left| \sum_{j\leq p-1,j\neq i} \frac{u_j}{\x ij}
				+\tilde{u_p}\right|^2
				+2\sup_{i\leq p-1} \left| u_i\sum_{j\leq p-1,j\neq i}\frac{x_i-x_j}{\x ij ^3}	u_j\right| \nonumber\\
		&		-2\sum_{j\leq p-1} \left| \frac{\x 1p}{\x jp} -1\right|^2 \tilde{u_p}^2
				-2\sum_{j\leq p-1} \frac{\x 1p}{\x jp^2}|\tilde{u_p}u_j| \nonumber\\
		&\geq	c_1(p,m) \( \sum_{i,j\leq p-1,j\neq i} \frac{u_j^2}{\x ij^2}+\tilde{u_p}^2\) \nonumber\\
		&		-\( 2\sum_{j\leq p-1} \left| \frac{\x 1p}{\x jp} +1\right|^2 \tilde{u_p}^2
				+2 \sum_{j\leq p-1} \frac{\x 1p}{\x jp^2}|\tilde{u_p}u_j| \).
	\end{align}
	
	Let $\beta'>\beta+1>0$ be such that
	\begin{align*}
		\frac{\beta^2}{\beta'-\beta} + \frac{\beta^2}{(1-\frac\beta{\beta'})(\beta'-\beta)}
		<	\frac{c_1(p,m)}{4p},\text{\quad and\quad} %\frac{\beta}{\beta'-\beta}<\frac13.	
		1-\frac{\beta}{\beta'}>\frac1{\sqrt2}\approx 0.707.
	\end{align*}
	
	Note that, the minus term in \eqref{eqn:l2-I1-4} can be written as
	\begin{align}\label{eqn:l2-1}
		&	2\sum_{j\leq p-1} \left| \frac{\x 1p}{\x jp} -1\right|^2 \tilde{u_p}^2 + 2\sum_{j\leq p-1} \frac{\x 1p}{\x jp^2}|\tilde{u_p}u_j| \nonumber\\
		&=	2\sum_{2\leq j\leq p-1} \( \left| \frac{\x 1p}{\x jp} -1\right|^2 \tilde{u_p}^2 + \frac{\x 1p}{\x jp^2}|\tilde{u_p}u_j|\) +2\frac{|\tilde{u_p}u_1|}{\x 1p}.
	\end{align}
	
	Considering the case $|x_1-x_p|>\beta'$, we have
	\begin{align*}
		& \left| \frac{\x 1p}{\x jp}-1\right|^2 \tilde{u_p}^2	+\frac{\x 1p}{\x jp^2} |\tilde{u_p}u_j| \\
		& \leq \frac{\(\x 1p-\x jp\)^2}{\x jp^2} \tilde{u_p}^2 + \frac{\x 1p}{\x jp^2} (\tilde{u_p}^2+u_j^2) \\
		& \leq \frac{\x 1j^2}{\x jp^2}\tilde{u_p}^2 + \frac{\x 1p}{\x jp^2}\tilde{u_p}^2 + \frac{\x 1j^2 \x 1p}{\x jp^2}\frac{u_j^2}{\x 1j^2} \\
		& \leq \frac{\beta^2}{(\beta'-\beta)^2}\tilde{u_p}^2 +\frac{\x 1j^2 \x1p}{\x pj^2} \(\tilde{u_p}^2+\frac{u_j^2}{\x 1j^2}\) \\
		& \leq \frac{\beta^2}{(\beta'-\beta)^2}\tilde{u_p}^2 + \frac{\x 1j^2}{\( 1-\frac{\x 1j}{\x 1p}\)\(\x 1p-\x 1j\)} \(\tilde{u_p}^2+\frac{u_j^2}{\x 1j^2}\) \\
		& \leq \frac{\beta^2}{(\beta'-\beta)^2}\tilde{u_p}^2 + \frac{\beta^2}{\(1-\frac{\beta}{\beta'}\)\(\beta'-\beta\)} \(\tilde{u_p}^2+\frac{u_j^2}{\x 1j^2}\) \\
		& \leq \( \frac{\beta^2}{(\beta'-\beta)^2} + \frac{\beta^2}{\(1-\frac{\beta}{\beta'}\)(\beta'-\beta)}\) \( \tilde{u_p}^2 + \frac{u_j^2}{\x 1j^2} \),
	\end{align*}
	and
	\begin{align*}
	\frac{|\tilde{u_p}u_1|}{\x 1p} \leq \frac{1}{\beta'}\(\tilde{u_p}^2+u_1^2\) =\frac1{\beta'}\(\tilde{u_p}^2+\frac{u_1^2}{|x_2-x_1|} \).
	\end{align*}
	Thus,
	\begin{align}
				\eqref{eqn:l2-1}
		&\leq	2\( \frac{\beta^2}{(\beta'-\beta)^2} + \frac{\beta^2}{\(1-\frac{\beta}{\beta'} \) 	(\beta'-\beta)} \)
				\sum_{2\leq j\leq p-1} \(\tilde{u_p}^2+\frac{u_j^2}{\x 1j^2}\)\notag\\
		&\phantom{\leq}	
		+	\frac2{\beta'} \(\tilde{u_p}^2+\frac{u_1^2}{\x 12^2} \) \notag\\
		&\leq	2\( \frac{\beta^2}{\beta'-\beta} +	\frac{\beta^2}{\(1-\frac{\beta}{\beta'} \) (\beta'-\beta)} \)
				\( (p-2)\tilde{u_p}^2 +\sum_{2\leq j\leq p-1} \frac{u_j^2}{\x 1j^2}\) \notag\\
		&\phantom{\leq}
		+	\frac{2\beta^2}{\beta'-\beta}\(\tilde{u_p}^2+\frac{u_1^2}{\x12^2}\) \notag\\
		&\leq	2\( \frac{\beta^2}{\beta'-\beta} + \frac{\beta^2}{\(1-\frac{\beta}{\beta'} \) (\beta'-\beta)} \) \times\notag\\
		&\phantom{=}\( (p-1)\tilde{u_p}^2 +\sum_{2\leq j\leq p-1} \frac{u_j^2}{\x 1j^2}+\frac{u_1^2}{\x12^2}\)  \notag\\
		&\leq	\frac{c_1(p,m)}{2p}	
				\(	(p-1)\tilde{u_p}^2+\sum_{i,j\leq p-1,i\neq j} \frac{u_j^2}{\x ij^2} \) \notag\\
		&\leq \frac{c_1(p,m)}{2} \(\tilde{u_p}^2 +\sum_{i,j\leq p-1,i\neq j} \frac{u_j^2}{\x ij^2}\)	\label{eqn:l2-I1-5}.
	\end{align}
	By \eqref{eqn:l2-I1-4} and \eqref{eqn:l2-I1-5},
	\begin{align}\label{eqn:l2-I1-6}
		&		\sum_{i\leq p} \left|\sum_{j\leq p,j\neq i}\frac{u_j}{\x ij} \right|^2 + 2\sup_{i\leq p} \left|u_i\sum_{j\leq p,j\neq i} \frac{x_i-x_j}{\x ij^3}u_j \right| \notag\\
		%&\geq	c_1(p,m) \(\sum_{i,j\leq p-1,i\neq j} \frac{u_j^2}{\x ij^2} +\tilde{u_p}^2\) - \frac{c_1(p,m)}{2}\(\sum_{i,j\leq p-1,j\neq i}\frac{u_j^2}{\x ij^2}+\tilde{u_p}^2\) \notag\\
		&\geq	\frac{c_1(p,m)}{2}\(\sum_{i,j\leq p-1,j\neq i}\frac{u_j^2}{\x ij^2}+\tilde{u_p}^2\).
	\end{align}

	Note that for $1\leq j\leq p-1$,
	\begin{align*}
				\frac{u_j^2}{\x pj^2}
		\leq	\frac{u_j^2}{\(\x p1-\x j1\)^2}
		\leq	\frac{u_j^2}{(\beta'-\beta)^2}
		\leq	\frac{u_j^2}{9\beta^2}
		\leq	\frac{u_j^2}{9\x 1j^2}.
	\end{align*}
	That is,
	\begin{align}\label{eqn:l2-2}
		\sum_{j\leq p-1} \frac{u_j^2}{\x pj^2} \leq \sum_{j\leq p-1} \frac{u_j^2}{9\x 1j^2}.
	\end{align}
	For $1\leq i\leq p-1$,
	\begin{align*}
				\frac{\x 1p^2}{\x ip^2}
		\leq	\frac{\x1p^2}{\(\x1p-\x1i\)^2}
		=		\frac1{\(1-\frac{\x 1i}{\x 1p}\)^2}
		\leq	\frac1{\(1-\frac{\beta}{\beta'}\)^2}
%		=		\(1+\frac{\beta}{\beta'-\beta}\)^2	\\
%		&\leq	(1+\frac13)^2
		<2.
	\end{align*}
	Thus,
	\begin{align*}
		\frac1{\x ip^2}	\leq	\frac2{\x 1p^2},	\quad	1\leq i\leq p-1.
	\end{align*}
	Sum over $i$ from 1 to $p-1$:
	\begin{align*}
	\sum_{i\leq p-1}\frac{1}{\x ip^2}	\leq	\frac{2p-2}{\x 1p^2}
	\end{align*}
	Thus,
	\begin{align}\label{eqn:l2-3}
		\sum_{i\leq p-1} \frac{u_p^2}{\x ip^2}	\leq	2p\frac{u_p^2}{\x 1p^2}	=	2p\tilde{u_p}^2.
	\end{align}
	Thus, by \eqref{eqn:l2-2} and \eqref{eqn:l2-3}
	\begin{align}\label{eqn:l2-I2}
				\sum_{i,j\leq p,i\neq j} \frac{u_j^2}{\x ij^2}
		&=		\sum_{i,j\leq p-1,i\neq j} \frac{u_j^2}{\x ij^2}	+\sum_{j\leq p-1,i=p} \frac{u_j^2}{\x pj^2}	+\sum_{i\leq p-1,j=p}\frac{u_p^2}{\x ip^2} \notag\\
		&\leq	\sum_{i,j\leq p-1,i\neq j} \frac{u_j^2}{\x ij^2} +\frac19 \sum_{j\leq p-1} \frac{u_j^2}{\x 1j^2} + 2p\tilde{u_p}^2 \notag\\
%		&\leq	2\sum_{i,j\leq p-1,i\neq j}\frac{u_j^2}{\x ij^2} +2p\tilde{u_p}^2 \\
		&\leq	2p\( \sum_{i,j\leq p-1,i\neq j}\frac{u_j^2}{\x ij^2} +\tilde{u_p}^2\).
	\end{align}
	
	Therefore, by \eqref{eqn:l2-I1-6} and \eqref{eqn:l2-I2}, we have
	\begin{align*}
		&		\sum_{i\leq p} \left|\sum_{j\leq p,j\neq i}\frac{u_j}{\x ij}	\right|^2 + 2\sup_{i\leq p} \left| u_i\sum_{j\leq p,j\neq i} \frac{x_i-x_j}{\x ij^3}u_j\right| \\
		&\geq	\frac{c_1(p,m)}{2} \(\sum_{i,j\leq p-1,i\neq j} \frac{u_j^2}{\x ij^2} + \tilde{u_p}^2	\) \\
		&\geq	\frac{c_1(p,m)}{4p} \sum_{i,j\leq p,i\neq j} \frac{u_j^2}{\x ij^2}.
	\end{align*}
	Thus, Conjecture~\ref{conj} holds for $\x1p>\beta'$.
	
%-------------------------------------------------------------------------------
%	\paragraph{有界}
	Now, consider the case $\x1p\leq\beta'$. we have
	$$1\leq|x_i-x_j|\leq2\beta',\quad 1\leq i,j\leq p.$$
	Let $c(p,m,x_1,\dots,x_p)$ be the infimum of
	%根??上??的论?, 我们??以??设$$ 1\leq|x_i-x_j|\leq\beta' $$ 对 $1\leq i,j\leq p$. 那么??需??明
	\begin{align*}
		\left(
		\sum_{\substack{i,j\\i\neq j}} \frac{u_j^2}{|x_i-x_j|^2}\right)^{-1}
		\times\left(
		\sum_{i\leq p} \left|\sum_{j\leq p,j\neq i,} \frac{u_j}{|x_i-x_j|}\right|^2
		+ 2\sup_{i\leq p}  \left|u_i \sum_{j\leq p,j\neq i} \frac{x_i-x_j}{|x_i-x_j|^3}u_j\right|
		\right)
	\end{align*}
	where $u_1^2+\dots u_p^2=1$.
	By the continuity of $c(p,m,x_1,\dots,x_p)$ relative to $x_1,\dots,x_p$, it suffices to prove that $c(p,m,x_1,\dots,x_p)>0$ for any $x_1,\dots,x_p$.
	
	Argue by contradiction. Assume that $c(p,m,x_1,\dots,x_p)=0$ for some given $x_1,\dots,x_p$, then by the continuity relative to $u_1,\dots,u_p$, there exist real numbers $u_1,\dots,u_p$ satisfying $u_1^2+\dots+u_p^2=1$ such that
	%???. 如果该?等???立, 那么存在满足$u_1^2+\dots+u_p^2=1$的实数$u_1,\dots,u_p$, 使得
	%By the above argument, we may assume that $$ 1\leq|x_i-x_j|\leq\beta' $$ for $1\leq i,j\leq p$. Thus it suffices to prove that $$ c(p,m,x_1,\dots,x_p)>0. $$
	%If the above inequality is not ture, then there are real numbers $u_1,\dots,u_p$ with $u_1^2+\dots+u_p^2=1$ such that
	\begin{align}
	\sum_{j\neq i} \frac{u_j}{|x_i-x_j|}=0,	\quad	1\leq i\leq p	\label{eqn:l2-iden1}\\
	u_i\sum_{j\neq i} \frac{x_i-x_j}{|x_i-x_j|^3}u_j=0,	\quad	1\leq i\leq p	\label{eqn:l2-iden2}
	\end{align}
	Since the inequality is invariant under translation of $x_1,\dots,x_p$, we may assume that $x_p=0$.
%	由于该?等?在关于$x_1,\dots,x_p$的平移下???, 我们?妨??设$x_p=0$.	
	Let
	\begin{alignat*}{3}
		t_i&=\frac{x_i}{|x_i|^2},	&\quad&	v_i&&=\frac{u_i}{|x_i|},	\quad	1\leq i\leq p-1,	\\
		t_p&=\infty,	&&	v_p&&=u_p.
	\end{alignat*}
	Note that
	\begin{align*}
		|t_i-t_j|=\frac{|x_i-x_j|}{|x_i||x_j|}, 1\leq i,j\leq p-1, i\neq j.	
	\end{align*}
	
	When $1\leq i\leq p-1$, we can rewrite \eqref{eqn:l2-iden1} as
	\begin{align}\label{eqn:l2-iden1'}
		0
		=	|x_i|\cdot\sum_{j\leq p,j\neq i}	\frac{u_j}{|x_i-x_j|}
		=	\sum_{j\leq p-1,j\neq i}	\frac{v_j}{|t_i-t_j|}+v_p,	\quad	1\leq i\leq p-1,
		%&=	\sum_{j\leq p-1,j\neq i} \frac{|t_i|^2-2\<t_i,t_j\>+|t_j|^2}{|t_i-t_j|^3} v_j +v_p,	\quad	1\leq i\leq p-1,
	\end{align}
	that is,
	\begin{align}\label{eqn:l2-iden1''}
		\sum_{j\leq p-1,j\neq i} \frac{|t_i|^2-2\<t_i,t_j\>+|t_j|^2}{|t_i-t_j|^3} v_j +v_p = 0,	\quad	1\leq i\leq p-1.
	\end{align}
	Similarly, $1\leq i\leq p-1$, we can rewrite \eqref{eqn:l2-iden2} as
	\begin{align*}
		0&=	u_i\sum_{j\neq i} \frac{x_i-x_j}{|x_i-x_j|^3}u_j \\
		&=	u_i\sum_{j\neq i,j\leq p-1} \frac{x_i-x_j}{|x_i-x_j|^3}u_j +u_i\frac{x_i}{|x_i|^3} u_p\\
		&=	\frac{v_i}{|t_i|} \sum_{j\neq i,j\leq p-1} \frac{|t_i|^3|t_j|^3}{|t_i-t_j|^3} \(\frac{t_i}{|t_i|^2}-\frac{t_j}{|t_j|^2}\) \frac{v_j}{|t_j|}
		+v_i t_i v_p,	\quad 	1\leq i\leq p-1,
	\end{align*}
	that is,
	\begin{align}\label{eqn:l2-iden3}
		v_i \sum_{j\neq i,j\leq p-1} \frac{|t_j|^2 t_i -|t_i|^2 t_j}{|t_i-t_j|^3} v_j
		+v_i t_i v_p =0,	\quad 	1\leq i\leq p-1.
	%=-u_i\frac{x_i}{|x_i|^3} u_p
	\end{align}
	Taking inner product with $t_i$, we have
	%对上???与$t_i$的内积，并化简,
	\begin{align}\label{eqn:l2-iden4}
	v_i \sum_{j\neq i,j\leq p-1} \frac{|t_j|^2 - \<t_i,t_j\>}{|t_i-t_j|^3} v_j
	+v_i v_p=0,	\quad 	1\leq i\leq p-1.
	%		=-\frac{u_i u_p}{|x_i|^3}
	\end{align}
	Taking the calculation $v_i\times\eqref{eqn:l2-iden1''} -2\times\eqref{eqn:l2-iden4}$, we get:	
	\begin{align}\label{eqn:l2-iden5}
	v_i \sum_{j\leq p-1,j\neq i} \frac{|t_i|^2-|t_j|^2}{|t_i-t_j|^3}v_j=v_iv_p,	\quad	1\leq i\leq p-1
	\end{align}
	Taking the calculation $\dfrac{1}{|t_i|^2}(t_i\times\eqref{eqn:l2-iden5}+\eqref{eqn:l2-iden3})$, we get:
	\begin{align}\label{eqn:l2-iden6}
	v_i \sum_{j\leq p-1,j\neq i} \frac{t_i-t_j}{|t_i-t_j|^3} v_j=0,	\quad	1\leq i\leq p-1.
	\end{align}
	
	By \eqref{eqn:l2-iden1'} and \eqref{eqn:l2-iden6},
	and by the assumption that $c_1(p,m,x_1,\dots,x_{p-1})>0$, we have $v_i=0,\forall i=1,\dots,p$.
	Thus $u_i=0,\forall i=1,\dots,p$, which is contrary to the assumption that $u_1^2+\dots+u_p^2=1$.
\end{proof}

%------------------------------------------------------------------------------

Before we apply Lemma~\ref{lemma2}, we need to define a useful transform.
\begin{defn}
	The Kelvin transform about some point $N\in\mathbb R^m$ is defined as follows:
	$$\mathcal K_N:	x\mapsto N+\frac{x-N}{\|x-N\|^2},$$
	where $x\in\mathbb R^m$.
\end{defn}

%-------------------------------------------------------------------------------

\begin{prop}\label{prop:invariant}
	Equations \eqref{eqn:equiv1} and \eqref{eqn:equiv2} about $u_1,\dots,u_p$
	\begin{align}
		\sum_{j\neq i} \frac{u_j}{\x{i}{j}} &= 0,	\quad 1\leq i\leq p	\tag{\ref{eqn:equiv1}}	\\%\label{eqn:equiv1}\\
		u_i\sum_{j\neq i} \frac{x_i-x_j}{\x{i}{j}^3} u_j &=0,	\quad 1\leq i\leq p	\tag{\ref{eqn:equiv2}}%\label{eqn:equiv2}
	\end{align}
	are invariant under Kelvin transform.
\end{prop}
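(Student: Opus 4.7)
Since both \eqref{eqn:equiv1} and \eqref{eqn:equiv2} are trivially invariant under translations of $\mathbb{R}^m$, we may assume $N=0$, so the Kelvin transform acts as the inversion $y_i := x_i/|x_i|^2$. Assuming $x_i \neq 0$ for all $i$ (so that the $y_i$ are all finite and distinct), we set $v_i := u_i/|x_i|$. The goal is to show that \eqref{eqn:equiv1}--\eqref{eqn:equiv2} hold for $(u,x)$ if and only if they hold for $(v,y)$.

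The invariance of \eqref{eqn:equiv1} is immediate from the classical identity $|y_i - y_j| = |x_i - x_j|/(|x_i||x_j|)$, which yields
\begin{equation*}
\sum_{j\neq i} \frac{v_j}{|y_i - y_j|} \;=\; |x_i|\sum_{j\neq i} \frac{u_j}{|x_i - x_j|}.
\end{equation*}

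For \eqref{eqn:equiv2}, combining $x_i = y_i/|y_i|^2$ with the above distance identity yields, after a short calculation,
\begin{equation*}
u_i \sum_{j\neq i} \frac{x_i - x_j}{|x_i-x_j|^3}\, u_j \;=\; v_i \sum_{j\neq i} \frac{y_i|y_j|^2 - y_j|y_i|^2}{|y_i - y_j|^3}\, v_j. \qquad (\ast)
\end{equation*}
The right-hand side of $(\ast)$ is not yet the vector $v_i\sum_{j\neq i}(y_i-y_j)/|y_i-y_j|^3\,v_j$ appearing in \eqref{eqn:equiv2} for $(v,y)$; bridging the two requires \eqref{eqn:equiv1} in $(v,y)$. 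Concretely, taking the inner product of $(\ast)$ with $y_i$ gives a scalar identity; forming an appropriate linear combination of it with $v_i$ times \eqref{eqn:equiv1} in the $(v,y)$ variables (after expanding $|y_i-y_j|^2 = |y_i|^2 - 2\langle y_i,y_j\rangle + |y_j|^2$) cancels the $\langle y_i,y_j\rangle$ contributions and yields
\begin{equation*}
v_i \sum_{j\neq i} \frac{|y_i|^2 - |y_j|^2}{|y_i - y_j|^3}\, v_j \;=\; 0.
\end{equation*}
Multiplying this scalar identity by the vector $y_i$ and adding the result to $(\ast)$ makes the $|y_j|^2$-terms collapse, leading to
\begin{equation*}
|y_i|^2\, v_i \sum_{j\neq i} \frac{y_i - y_j}{|y_i - y_j|^3}\, v_j \;=\; 0,
\end{equation*}
which is exactly \eqref{eqn:equiv2} for $(v,y)$ upon dividing by $|y_i|^2 \neq 0$. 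This chain of manipulations is precisely that of \eqref{eqn:l2-iden3}--\eqref{eqn:l2-iden6} in the proof of Lemma~\ref{lemma2}. Since the Kelvin transform is an involution and the substitution $v_i = u_i/|x_i|$ is compatible with iteration, the reverse implication follows by symmetry.

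\textbf{Main obstacle.} The essential technical point is the treatment of \eqref{eqn:equiv2}: direct substitution does \emph{not} produce the desired vector equation in the new coordinates, and one must couple it with \eqref{eqn:equiv1} via inner-product projections onto $y_i$ together with a carefully chosen linear combination in order to isolate the $(y_i - y_j)/|y_i - y_j|^3$ term. Without \eqref{eqn:equiv1}, the invariance of \eqref{eqn:equiv2} alone would fail.
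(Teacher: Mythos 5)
Your proposal is correct and follows essentially the same route as the paper: translate the Kelvin center to the origin, compute the distance identity and the substitution $v_i = |y_i'|u_i$, observe that direct substitution into \eqref{eqn:equiv2} produces the $y_i|y_j|^2 - y_j|y_i|^2$ term rather than $y_i - y_j$, and then use a linear combination with \eqref{eqn:equiv1} together with the projection onto $y_i$ (equivalently the paper's combination $u_i y_i' \times \eqref{eqn:equiv3} + \eqref{eqn:equiv4} - 2\frac{y_i'}{|y_i'|^2}\langle\eqref{eqn:equiv4}, y_i'\rangle$) to isolate the desired vector equation. The only cosmetic difference is that you normalize $N = 0$ at the outset, whereas the paper carries the general center $N$; and you make explicit the observation that \eqref{eqn:equiv2} by itself is not Kelvin-invariant and must be coupled with \eqref{eqn:equiv1}, which is a useful point that the paper leaves implicit in the algebra.
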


%-------------------------------------------------------------------------------

\begin{proof}
	Fix $N\neq x_i$, $\forall 1\leq i\leq p$.
	Let
	\begin{align*}
		y_i = \mathcal K_N(x_i)= N+\frac{x_i-N}{|x_i-N|^2},\quad y_i' = y_i-N = \frac{x_i-N}{|x_i-N|^2},\quad 1\leq i\leq p.
	\end{align*}
	Note that $|y_i'| = \frac1{|x_i-N|}$ and that
	\begin{align*}
			\yy ij^2
		&=	\<y_i'-y_j',y_i'-y_j'\>	\\
		&=	\frac{1}{|x_i-N|^2|x_j-N|^2} \( |x_i-N|^2 -2\<x_i-N,x_j-N\> +|x_j-N|^2	\)	\\
		&=	\(	\frac{|x_i-x_j|}{|x_i-N||x_j-N|}	\)^2,
	\end{align*}
	thus,
	\begin{align*}
		\x ij = \frac{\yy ij}{|y_i'||y_j'|}.
	\end{align*}
	\eqref{eqn:equiv1} and \eqref{eqn:equiv2} will be changed to:
	\begin{align}
		|y_i'| \sum_{j\neq i}\frac1{\yy ij} |y_j'| u_j &= 0,	\quad 1\leq i\leq p	\label{eqn:equiv3}\\
		|y_i'|u_i \sum_{j\neq i}\frac{y_i'|y_j'|^2-y_j'|y_i'|^2}{\yy ij^3} |y_j'|u_j &= 0,	\quad 1\leq i\leq p	\label{eqn:equiv4}		
	\end{align}
%	{\color{red}
%	$u_i\times\eqref{eqn:equiv3}-2\<\eqref{eqn:equiv4},\frac{y_i'}{|y_i'|^2}\>$:
%	\begin{align}\label{eqn:p1-1}
%		|y_i'|u_i \sum_{j\neq i} \frac{|y_i'|^2-|y_j'|^2}{\yy ij^3} |y_j'| u_j = 0.
%	\end{align}
%	$\eqref{eqn:equiv4}+y_i'\times\eqref{eqn:p1-1}$:
%	\begin{align}\label{eqn:p1-2}
%		|y_i'|^3 u_i \sum_{j\neq i} \frac{y_i'-y_j'}{\yy ij^3} |y_j'|u_j
%		=	|y_i'| u_i \sum_{j\neq i} \frac{ \(y_i'|y_j'|^2-y_j'|y_i'|^2\) + y_i'\(|y_i'|^2-|y_j'|^2\)}{\yy ij^3} |y_j'|u_j =0
%	\end{align}
%两步缩?一步，改?：}
	Taking the calculation $u_iy_i'\times\eqref{eqn:equiv3}	+ \eqref{eqn:equiv4}	- 2\frac{y_i'}{|y_i'|^2}\times\<\eqref{eqn:equiv4},y_i'\>$, we get:
	\begin{align}\label{eqn:p1-2}
		|y_i'|^3 u_i \sum_{j\neq i} \frac{y_i'-y_j'}{\yy ij^3} |y_j'|u_j =0
	\end{align}
	
	Let $v_i = |y_i'|u_i$ for $i=1,\dots,p$, then \eqref{eqn:equiv3} and \eqref{eqn:p1-2} give
	\begin{align}
		\sum_{j\neq i} \frac1{\y ij}v_j &= 0,	\quad	1\leq i\leq p	\label{eqn:equiv1'}\\
		v_i \sum_{j\neq i} \frac{y_i-y_j}{\y ij^3} v_j &= 0,	\quad	1\leq i\leq p.	\label{eqn:equiv2'}
	\end{align}
	Therefore, the equations \eqref{eqn:equiv1} and \eqref{eqn:equiv2} are invariant under Kelvin transform.
\end{proof}

	Sometimes we need to send some point to infinity, e.g., $N=x_p$.
	Again, let $y_i=\mathcal K_{x_p}(x_i)$ and $y_i'=y_i-x_p$, namely,
	\begin{align*}
		y_i = x_p+\frac{x_i-x_p}{\x ip^2},\quad y_i'=\frac{x_i-x_p}{\x ip^2},\quad 1\leq i\leq p-1,
	\end{align*}
	and $y_p=\infty$.
	Then \eqref{eqn:equiv1} will be changed to:
	\begin{align}
			|y_i'| \( \sum_{1\leq j\leq p-1,j\neq i} \frac{1}{\yy ij} |y_j'|u_j +u_p\) &= 0,	\quad	1\leq i\leq p-1	\label{eqn:equiv5}\\
			\sum_{j=1}^{p-1} |y_j'|u_j &= 0,	\quad i=p.\label{eqn:equiv6}
	\end{align}
	And \eqref{eqn:equiv2} will be changed to:
	\begin{align}
		|y_i'|u_i \( \sum_{1\leq j\leq p-1,j\neq i} \frac{y_i'|y_j'|^2-y_j'|y_i'|^2}{\yy ij^3} |y_j'|u_j + y_i'u_p	\) &= 0,	\quad	1\leq i\leq p-1	\label{eqn:equiv7}\\
		u_p \sum_{j=1}^{p-1} y_j'|y_j'|u_j &= 0,	\quad	i=p\label{eqn:equiv8}.
	\end{align}
%	{\color{red}	\eqref{eqn:equiv8} can be derived from \eqref{eqn:equiv7} by summing over $i$ from 1 to $p-1$.}
%
	By a similar argument as above, %$u_i\times\eqref{eqn:equiv6}-2\<\eqref{eqn:equiv7},\frac{y_i'}{|y_i'|^2}\>$:
	taking the calculation $u_iy_i'\times\eqref{eqn:equiv5}	+ \eqref{eqn:equiv7}	- 2\frac{y_i'}{|y_i'|^2}\times\<\eqref{eqn:equiv7},y_i'\>$ gives
	\begin{align}\label{eqn:p1-3}
		|y_i'|^3 u_i \sum_{1\leq j\leq p-1,j\neq i} \frac{y_i'-y_j'}{\yy ij^3}|y_j'|u_j =0,	\quad	1\leq i\leq p-1.
	\end{align}
	Let $v_i = |y_i'|u_i$ for $1\leq i\leq p-1$, and $v_p =u_p$, then \eqref{eqn:equiv5} and \eqref{eqn:p1-3} give:
	\begin{align*}
		\sum_{1\leq j\leq p-1,j\neq i}\frac1{\y ij}v_j +v_p &= 0,	\quad 1\leq i\leq p-1	\\
		v_i \sum_{1\leq j\leq p-1,j\neq i} \frac{y_i-y_j}{\y ij^3}v_j &=0,	\quad 1\leq i\leq p-1.
	\end{align*}

Since Kelvin transform is invertible, we have the following corollary.
\begin{cor}\label{cor1}
	Given any $x_1,\dots,x_p$, the equations about $u_1,\dots,u_p$
	\begin{align*}
		\sum_{j\neq i} \frac{u_j}{\x{i}{j}} = 0,	\quad
		u_i\sum_{j\neq i} \frac{x_i-x_j}{\x{i}{j}^3} u_j =0,	\quad 1\leq i\leq p
	\end{align*}	
	have only zero solution if and only if given any $y_1,\dots,y_{p-1}$, the equations about $v_1,\dots,v_p$
	\begin{align*}
		\sum_{1\leq j\leq p-1,j\neq i}\frac1{\y ij}v_j +v_p = 0,	\quad
		v_i \sum_{1\leq j\leq p-1,j\neq i} \frac{y_i-y_j}{\y ij^3}v_j =0,	\quad 1\leq i\leq p-1.
	\end{align*}
	have only zero solution.\qed
\end{cor}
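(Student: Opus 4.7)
The bulk of this corollary is already established in the displayed equations \eqref{eqn:equiv5}--\eqref{eqn:p1-3} that immediately precede its statement; what remains is to package them as a two-way correspondence at the level of nonzero solutions. My plan is to verify that the Kelvin transform based at one of the points provides a bijection between solutions of the two systems, exploiting the fact that $\mathcal K_N$ is an involution.

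For the $(\Leftarrow)$ direction I would argue contrapositively. Given distinct $y_1,\dots,y_{p-1}\in\mathbb R^m$ and a nonzero solution $(v_1,\dots,v_p)$ of the $y$-system, choose any point $N\in\mathbb R^m$ distinct from all $y_i$, set $x_p=N$, and let $x_i=\mathcal K_N(y_i)$ for $1\le i\le p-1$. Since $\mathcal K_N\circ\mathcal K_N=\mathrm{id}$, we recover $y_i=\mathcal K_{x_p}(x_i)$ and $|y_i'|=1/\x ip\ne 0$. Setting $u_i=v_i/|y_i'|$ for $i\le p-1$ and $u_p=v_p$ produces a nonzero tuple $(u_i)$, and running the calculation \eqref{eqn:equiv5}--\eqref{eqn:p1-3} backwards shows it solves the $x$-system. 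The $(\Rightarrow)$ direction is symmetric: starting from a nonzero solution $(u_i)$ of the $x$-system for some $x_1,\dots,x_p$, define $y_i=\mathcal K_{x_p}(x_i)$, $v_i=|y_i'|u_i$ for $i\le p-1$, and $v_p=u_p$; nontriviality is preserved because $|y_i'|\ne 0$, and the preceding computation directly gives the $y$-system.

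I do not anticipate any genuine obstacle, since the algebraic equivalence of the two systems under the Kelvin transform is already worked out in detail just above the corollary. The only points demanding a little care are (i) that the scaling factor $|y_i'|$ never vanishes, which is automatic from $x_i\ne x_p$, and (ii) that the base point $N$ in the reverse construction can be chosen off the finite set $\{y_1,\dots,y_{p-1}\}$, which is trivially possible in $\mathbb R^m$. With these two checks in place, the bijection between nonzero solution sets is complete and the corollary follows at once.
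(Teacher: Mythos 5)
Your proposal takes essentially the same route as the paper: there the corollary is presented as an immediate consequence of the Kelvin-transform computations in \eqref{eqn:equiv5}--\eqref{eqn:p1-3} together with ``Since Kelvin transform is invertible,'' which is exactly the two-way correspondence of nonzero solution sets you spell out (choice of base point $N=x_p$, nonvanishing of $|y_i'|$, the involution $\mathcal K_N\circ\mathcal K_N=\mathrm{id}$). One point that both you and the paper pass over quickly, and that merits a word if you write this up: running the computation backwards from the stated $y$-system only reconstructs the $x$-equations \eqref{eqn:equiv1}, \eqref{eqn:equiv2} for $i\leq p-1$, whereas the two $i=p$ equations transform to \eqref{eqn:equiv6} and \eqref{eqn:equiv8} (i.e.\ $\sum_{j\leq p-1}v_j=0$ and $v_p\sum_{j\leq p-1}y_j'v_j=0$), which do not appear among the $p-1$ pairs of $y$-equations, so you should explain why they are recovered in the reverse direction.
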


%-------------------------------------------------------------------------------

\begin{thm}\label{thm:equiv}
Let $p_0\geq4$.
If Conjecture~\ref{conj} is valid for $p\leq p_0-1$, then when $p=p_0$, the conjecture is valid if and only if for any $x_1,\dots,x_p$, equations \eqref{eqn:equiv1} and \eqref{eqn:equiv2}
\begin{align*}
	\sum_{j\neq i} \frac{u_j}{\x{i}{j}} &= 0,	\quad 1\leq i\leq p	\tag{\ref{eqn:equiv1}}\\
	u_i\sum_{j\neq i} \frac{x_i-x_j}{\x{i}{j}^3} u_j &=0,	\quad 1\leq i\leq p	\tag{\ref{eqn:equiv2}}
\end{align*}
about $u_1,\dots,u_p$ have NO non-zero solution.
\end{thm}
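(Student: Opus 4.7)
The plan is to prove both implications, with the forward direction by a direct contradiction and the reverse assembled from Lemmas~\ref{lemma1}--\ref{lemma2} and Corollary~\ref{cor1}.

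For the forward ($\Longrightarrow$) direction, suppose Conjecture~\ref{conj} is valid at $p=p_0$ with some constant $c(p,m)>0$ but that there exists $U=(u_1,\ldots,u_p)^T\neq 0$ satisfying both \eqref{eqn:equiv1} and \eqref{eqn:equiv2}. Then $I_1(U)=0$ since each squared term and the supremum in $I_1$ vanish identically, while $I_2(U)>0$ because at least one $u_j\neq 0$ contributes $u_j^2/|x_i-x_j|^2>0$ for any $i\neq j$. This contradicts $I_1\geq c(p,m)I_2$.

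The reverse ($\Longleftarrow$) direction is the substantive part. Assuming the equations have only the zero solution for every configuration, I would first invoke homogeneity and the normalization in the paragraph preceding Lemma~\ref{lemma1} to reduce to $1=|x_1-x_2|\leq|x_1-x_3|\leq\cdots\leq|x_1-x_p|$. Next I would iterate Lemma~\ref{lemma1}: take $s=2$ with $\beta_1=1$ to obtain $M_2$ handling $|x_1-x_3|>M_2$; take $s=3$ with $\beta_2=M_2$ to obtain $M_3$ handling $|x_1-x_4|>M_3$; proceed inductively through $s=p-2$ with $\beta_{s-1}=M_{s-1}$. After these $p-3$ reductions, it suffices to treat configurations with $|x_1-x_j|\leq M_{p-2}=:\beta$ for $2\leq j\leq p-1$, which by the triangle inequality gives $1\leq|x_i-x_j|\leq 2\beta$ for all $1\leq i,j\leq p-1$. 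In this regime, Lemma~\ref{lemma2} finishes the job, provided we verify its hypothesis $c_1(p,m,x_1,\ldots,x_{p-1})>0$.

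To verify this hypothesis for arbitrary $x_1,\ldots,x_{p-1}$, I would argue by contradiction. The ratio whose infimum defines $c_1$ is continuous on the compact unit sphere $\{u_1^2+\cdots+u_{p-1}^2+\tilde{u}_p^2=1\}$, so the infimum is attained. If $c_1=0$, the minimizing vector $(u_1,\ldots,u_{p-1},\tilde{u}_p)\neq 0$ makes every square and the supremum vanish, forcing
\[
    \sum_{\substack{j\leq p-1\\ j\neq i}}\frac{u_j}{|x_i-x_j|}+\tilde{u}_p=0, \qquad u_i\sum_{\substack{j\leq p-1\\ j\neq i}}\frac{x_i-x_j}{|x_i-x_j|^3}u_j=0, \qquad 1\leq i\leq p-1.
\]
Setting $v_i=u_i$ for $i\leq p-1$ and $v_p=\tilde{u}_p$, this is precisely the $(p-1)$-point $y$-system of Corollary~\ref{cor1} with $y_i=x_i$. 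But the standing hypothesis (that \eqref{eqn:equiv1}--\eqref{eqn:equiv2} admit only the zero solution for every $p$-point configuration) combined with Corollary~\ref{cor1} forces this $y$-system also to admit only the zero solution, contradicting $(u_1,\ldots,u_{p-1},\tilde{u}_p)\neq 0$.

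The principal obstacle I anticipate is the careful bookkeeping in the iterated application of Lemma~\ref{lemma1}: one must verify at each stage that the accumulated thresholds $\beta_1,\ldots,\beta_{s-1}$ all satisfy $\beta_i\geq 1$ and that the hypothesis \eqref{eqn:condition} is met, so that the chain terminates with $x_1,\ldots,x_{p-1}$ genuinely confined (modulo translation) to a compact region on which the continuity-compactness step inside Lemma~\ref{lemma2}'s proof can promote the pointwise positivity of $c_1$ to a uniform lower bound depending only on $p$ and $m$. Once this bookkeeping is organized, the theorem follows by simply splicing together Lemmas~\ref{lemma1} and~\ref{lemma2} with the Kelvin-transform equivalence encoded in Corollary~\ref{cor1}.
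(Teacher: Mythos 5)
Your proposal is correct and follows essentially the same route as the paper: the forward direction is the same $I_1=0$, $I_2>0$ contradiction, and your reverse direction is the paper's argument read in the contrapositive --- the paper deduces from a failure of the Conjecture that $c_1=0$ somewhere (via Lemma~\ref{lemma2}), extracts a nonzero minimizer by compactness, and passes it through Corollary~\ref{cor1} to produce a nonzero solution of \eqref{eqn:equiv1}--\eqref{eqn:equiv2}, which is exactly the chain you run in the forward direction. The only cosmetic difference is that you absorb the iterated use of Lemma~\ref{lemma1} and the definition of $c_1$ into the proof of the theorem, whereas the paper arranges them as standing setup in the text preceding Lemma~\ref{lemma2}.
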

\begin{proof}	
	The \textit{only if} direction is obvious due to the positivity of $I_2$. Otherwise,
	assume for some $x_{10},\dots,x_{p0}$, the equations \eqref{eqn:equiv1} and \eqref{eqn:equiv2} about $u_1,\dots,u_p$ have a non-zero solution $(u_{10},\dots,u_{p0})$.
	%	Then,
	%	\begin{align*}
	%		c(p,m,x_{10},\dots,x_{p0})
	%		\leq
	%		\left(
	%		\sum_{\substack{i,j\\i\neq j}} \frac{u_j^2}{|x_i-x_j|^2}\right)^{-1}
	%		\times\left(
	%		\sum_{i\leq p} \left|\sum_{j\leq p,j\neq i,} \frac{u_j}{|x_i-x_j|}\right|^2
	%		+ 2\sup_{i\leq p}  \left|u_i \sum_{j\leq p,j\neq i} \frac{x_i-x_j}{|x_i-x_j|^3}u_j\right|
	%		\right)
	%	\end{align*}
	Substitute $x_{i0}$ and $u_{i0}$, $1\leq i\leq p$ in $I_1,I_2$:
	\begin{align*}
		I_1&=	\sum_{1\leq i\leq p} \left| \sum_{j\neq i} \frac{u_{j0}}{\x {i0}{j0}} \right|^2
		+	2\sup_{1\leq i\leq p} \left|\sum_{j\neq i} u_{i0} \frac{x_{i0}-x_{j0}}{\x {i0}{j0}^3} u_j \right|	
		=	0,	\\
		I_2&=	\sum_{\substack{i,j\\j\neq i}} \frac{u_{j0}^2}{\x{i0}{j0}^2}>0,
	\end{align*}
	which is a contradiction.
%-------------------------------------------------------------------------------

	Conversely, assume that Conjecture~\ref{conj} is NOT valid, by Lemma~\ref{lemma2} there exist $y_{10},\dots,y_{p-1,0}\in\mathbb R^m$, such that
	\begin{align*}
		c_1(p,m,y_{10},\dots,y_{p-1,0}) = 0.
	\end{align*}
	By the definition of $c_1(p,m,y_{10},\dots,y_{p-1,0})$ and the property of continuous function on a compact region, there exist $v_{10},\dots,v_{p0}\in\mathbb R$ satisfying $\sum_{i=1}^p v_{i0}^2=1$, such that
	\begin{align*}
		\sum_{1\leq j\leq p-1,j\neq i}\frac1{\y{i0}{j0}}v_{j0} +v_{p0} = 0,	\quad
		v_{i0} \sum_{1\leq j\leq p-1,j\neq i} \frac{y_{i0}-y_{j0}}{\y{i0}{j0}^3}v_{j0} =0,	\quad 1\leq i\leq p-1.
	\end{align*}
	By Corollary~\ref{cor1}, there exist $x_{10},\dots,x_{p0}\in\mathbb R^m$, such that the equations \eqref{eqn:equiv1} and \eqref{eqn:equiv2} have non-zero solutions.
\end{proof}
Now, we may prove Theorem~\ref{thm:A}.
\begin{proof}[Proof of Theorem~\ref{thm:A}]\label{proof:thmA}
	We argue by induction on $p_0$.
	
	When $p\leq3$, Conjecture~\ref{conj} holds due to Xu \cite{Xu05}, so we may apply Theorem~\ref{thm:equiv} for $p_0=4$.
	Then we see that Conjecture~\ref{conj} holds for $p=4$ if and only if \eqref{eqn:equiv1} and \eqref{eqn:equiv2} have no non-zero solution for $p=4$.
	This proves Theorem~\ref{thm:A} for $p_0=4$.
	
	Now assume the equivalence of Theorem~\ref{thm:A} holds for $4\leq p\leq p_0-1$.
	We want to show that
	Conjecture~\ref{conj} holds for $4\leq p\leq p_0$ is equivalent to that \eqref{eqn:equiv1} and \eqref{eqn:equiv2} have no non-zero solution for $4\leq p\leq p_0$.
	
	The necessity is obvious due to the positivity of $I_2$. It is left to consider the sufficiency.
	By the given condition for $4\leq p\leq p_0-1$ and the induction hypothesis, Conjecture~\ref{conj} holds for $4\leq p\leq p_0-1$.
	Applying Theorem~\ref{thm:equiv}, by the given condition for $p=p_0$, we see that Conjecture~\ref{conj} holds for $p=p_0$, hence for $p\leq p_0$.
\end{proof}

Note that equations \eqref{eqn:equiv1} and \eqref{eqn:equiv2} can be written in matrix form, i.e.,
\begin{align}\label{eqn:equiv-mat}
AU=0, \quad U\tran\dpd{A}{x_i}U=0, \quad i=1,\dots,p.
\end{align}

Let $1\leq\alpha\leq p$ be a natural number, and denote by $U_\alpha$ the column vector obtained by crossing out the $\alpha$-th row of $U$, and $A_\alpha$ the $(p-1)\times(p-1)$ matrix obtained by crossing out the $\alpha$-th row and $\alpha$-th column of $A$.

To prove Conjecture~\ref{conj} inductively by contradiction, one needs only to verify whether there is a non-zero solution  $U=(u_1,\dots,u_p)\tran$  of equation \eqref{eqn:equiv-mat} with $u_i\neq0$ for all $i=1,\cdots,p$.
Since if $u_\alpha=0$ for some $1\leq\alpha\leq p$, then \eqref{eqn:equiv-mat} yields
\begin{align*}
A_\alpha U_\alpha=0, \quad U_\alpha\tran\dpd{A_\alpha}{x_i}U_\alpha=0, \quad 1\leq i\neq\alpha\leq p,
\end{align*}
which has the same form as \eqref{eqn:equiv-mat}.
By the inductive assumption that the conjecture holds for $p\leq p_0-1$ and Theorem~\ref{thm:equiv}, we have $U_\alpha=0$, hence $U=0$.
The claim follows from the contradiction.

In summary, we have the following corollary.
\begin{cor}\label{cor:nonzero}
	If the conjecture holds for $p\leq p_0-1$ and does not hold for $p=p_0$, then any component of a non-zero solution to \eqref{eqn:equiv-mat} is non-zero.
\end{cor}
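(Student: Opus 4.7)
The plan is a direct contradiction argument that formalizes the sketch given in the paragraph immediately preceding the statement. Suppose toward a contradiction that $U=(u_1,\dots,u_p)\tran$ is a non-zero solution of \eqref{eqn:equiv-mat} but $u_\alpha=0$ for some index $1\leq\alpha\leq p$. My goal is to derive $U=0$, which contradicts the choice of $U$.

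First I would record the compatibility of the reduction: deleting the $\alpha$-th row and column of $A$ produces exactly the matrix associated to the reduced configuration $\{x_i\}_{i\neq\alpha}$ of $p-1$ points, and for each $i\neq\alpha$ the block $\dpd{A_\alpha}{x_i}$ agrees with the corresponding partial derivative of the reduced matrix in the sense of the conjecture. Because $u_\alpha=0$, the $i$-th component of $AU=0$ for $i\neq\alpha$ collapses to $\sum_{j\neq i,\,j\neq\alpha}a_{ij}u_j=0$, which is precisely $A_\alpha U_\alpha=0$. Similarly, every term of the quadratic form $U\tran\dpd{A}{x_i}U$ involving row or column $\alpha$ of $\dpd{A}{x_i}$ carries a factor of $u_\alpha$, so for each $i\neq\alpha$ one obtains $U_\alpha\tran\dpd{A_\alpha}{x_i}U_\alpha=0$.

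Thus the reduced data $(U_\alpha,\{x_i\}_{i\neq\alpha})$ satisfies the analogue of \eqref{eqn:equiv-mat} in the $p-1$ setting, which is to say the quantity $I_1$ associated with this reduced configuration vanishes. Since by hypothesis Conjecture~\ref{conj} is valid for $p-1\leq p_0-1$, the inequality $I_1\geq c(p-1,m)I_2$ with $c(p-1,m)>0$ forces the corresponding $I_2$ to vanish as well, which gives $u_j=0$ for every $j\neq\alpha$. Combined with $u_\alpha=0$, this yields $U=0$, contradicting our choice of $U$.

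No substantive obstacle is expected: the entire argument is the combinatorial reduction above together with a single appeal to the inductive hypothesis via the strict positivity of $c(p-1,m)$. The only place to exercise mild care is the base case $p_0=4$, where $p-1=3$ and the required validity of the conjecture is supplied by Xu~\cite{Xu05}, exactly as in the proof of Theorem~\ref{thm:A}.
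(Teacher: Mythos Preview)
Your proposal is correct and follows essentially the same approach as the paper: assume $u_\alpha=0$, observe that the reduced vector $U_\alpha$ solves the analogue of \eqref{eqn:equiv-mat} for the $p-1$ points $\{x_i\}_{i\neq\alpha}$, and then invoke the validity of the conjecture at level $p-1$ to force $U_\alpha=0$. The only cosmetic difference is that the paper phrases the last step as an appeal to Theorem~\ref{thm:equiv}, whereas you go directly through the inequality $I_1\geq c(p-1,m)I_2$; your route is in fact slightly more economical, since only the trivial ``only if'' direction (positivity of $I_2$) is needed.
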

%-------------------------------------------------------------------------------
%-------------------------------------------------------------------------------
%-------------------------------------------------------------------------------

\section{Further equivalent conditions and partial results}\label{sec:3}
In this section, we shall continue to use Kelvin transform to simplify the conjecture even further and prove it for the basic case $m=1$ with arbitrary $p$, and $p=4,5$ with arbitrary $m$.
Again, assuming that the conjecture holds for $p\leq p_0-1$ ($p_0\geq4$), we shall consider the case when $p=p_0$.

Note that we can always embed the ambient Euclidean space $\mathbb R^m$, where the $p$ points $x_1,\dots,x_p$ lie in, into the first $m$-components of a larger space $\mathbb R^{m+1}$.
Then we apply the Kelvin transform about the point $N=(0,\dots,0,1)\in\mathbb R^{m+1}\setminus\mathbb R^m$.
Note that, this map is, in fact, a stereographic projection.
The image of $\mathbb R^m\hookrightarrow\mathbb R^{m+1}$ is an $m$-sphere centered at $\frac12N=(0,\dots,0,\frac12)$ with radius $\frac12$ in $\mathbb R^{m+1}$.

Let $y=\mathcal K_N(x)$ denote the image of $x$. By Proposition~\ref{prop:invariant}, equations \eqref{eqn:equiv1} and \eqref{eqn:equiv2} become equations \eqref{eqn:equiv1'} and \eqref{eqn:equiv2'}:
\begin{align}
	\sum_{j\neq i} \frac1{\y ij}v_j &= 0,	\quad	1\leq i\leq p,	\tag{\ref{eqn:equiv1'}}\\
	v_i \sum_{j\neq i} \frac{y_i-y_j}{\y ij^3} v_j &= 0,	\quad	1\leq i\leq p.	\tag{\ref{eqn:equiv2'}}
\end{align}
Since these equations are invariant under translation and scaling of $y_1,\dots,y_n$, we may assume that $y_1,\dots,y_p$ lie on the unit $m$-sphere $\mathbb S^m$.
%\in S^m\hookrightarrow\mathbb R^{m+1}$, and $|y_i|=1$, $1\leq i\leq p$.

%In fact, by the argument above, equation \eqref{eqn:equiv2'} is equivalent to
%\begin{align}
%	\sum_{j\neq i} \frac{y_i-y_j}{\y ij^3} v_j = 0,	\quad	1\leq i\leq p.	\label{eqn:equiv2''}
%\end{align}

%-------------------------------------------------------------------------------

Now, we prove the following theorem.
\begin{thm}\label{thm:equiv2}
	Let $p_0\geq4$.
	If the conjecture holds for $p\leq p_0-1$, then when $p=p_0$, the conjecture is valid if and only if given any distinct $y_1,\dots,y_p\in \mathbb S^m$, the equations
	\begin{align}
		\sum_{j\neq i} \frac{y_i-y_j}{\y ij^3} v_j =0,	\quad 1\leq i\leq p	\tag{\ref{eqn:equiv2''}}
	\end{align}
	 for $v_1,\dots,v_p$ have only zero solution.
\end{thm}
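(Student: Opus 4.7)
The plan is to combine Theorem~\ref{thm:equiv} with the Kelvin-transform invariance of Proposition~\ref{prop:invariant} to reduce the criterion to equations on the unit sphere, and then exploit a short algebraic identity that holds only on a sphere to show that \eqref{eqn:equiv2''} alone is as strong as the pair \eqref{eqn:equiv1'}+\eqref{eqn:equiv2'}. Concretely, by Theorem~\ref{thm:equiv} (applicable under the inductive hypothesis), the conjecture at $p=p_0$ is equivalent to \eqref{eqn:equiv1}+\eqref{eqn:equiv2} having only the zero solution; by Proposition~\ref{prop:invariant} together with the stereographic embedding described just before the theorem, this is equivalent to \eqref{eqn:equiv1'}+\eqref{eqn:equiv2'} having only the zero solution with the $y_i$ ranging over the image sphere. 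Both \eqref{eqn:equiv1'} and \eqref{eqn:equiv2'} are clearly invariant under translation and positive scaling of the $y_i$, so we may pass to distinct $y_1,\ldots,y_p\in \mathbb S^m$. It remains to compare this with \eqref{eqn:equiv2''}.

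For the necessity direction ($\Rightarrow$), assume the conjecture holds at $p=p_0$ and let $(v_1,\ldots,v_p)$ solve \eqref{eqn:equiv2''} for some distinct $y_1,\ldots,y_p\in \mathbb S^m$. Multiplying by $v_i$ gives \eqref{eqn:equiv2'} immediately. To recover \eqref{eqn:equiv1'}, take the inner product of \eqref{eqn:equiv2''} with $y_i$; since $|y_i|^2=1$ implies $\langle y_i, y_i-y_j\rangle = \tfrac12|y_i-y_j|^2$, the identity collapses to $\tfrac12\sum_{j\neq i}v_j/|y_i-y_j|=0$, which is precisely \eqref{eqn:equiv1'}. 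Thus $(v_1,\ldots,v_p)$ solves the full pair \eqref{eqn:equiv1'}+\eqref{eqn:equiv2'}, and the assumed validity of the conjecture at $p_0$ (pulled back by the inverse Kelvin transform) forces $v_1=\cdots=v_p=0$.

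For the sufficiency direction ($\Leftarrow$), suppose \eqref{eqn:equiv2''} admits only the trivial solution on $\mathbb S^m$ and, for contradiction, that the conjecture fails at $p=p_0$. Theorem~\ref{thm:equiv} then supplies distinct $x_1,\ldots,x_p$ and a non-zero $(u_1,\ldots,u_p)$ satisfying \eqref{eqn:equiv1}+\eqref{eqn:equiv2}; by Corollary~\ref{cor:nonzero}, every $u_i$ is non-zero. Applying the stereographic Kelvin transform of Proposition~\ref{prop:invariant} and rescaling/translating the image onto $\mathbb S^m$, we obtain distinct $y_1,\ldots,y_p\in \mathbb S^m$ and $v_i=|y_i'|u_i\neq 0$ solving \eqref{eqn:equiv1'}+\eqref{eqn:equiv2'}. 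Since each $v_i$ is non-zero we may divide \eqref{eqn:equiv2'} by $v_i$, which produces \eqref{eqn:equiv2''}. The hypothesis then forces $v_1=\cdots=v_p=0$, contradicting $v_i\neq 0$.

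The main conceptual step is the unit-sphere identity $\langle y_i,y_i-y_j\rangle=\tfrac12|y_i-y_j|^2$, which is exactly what upgrades the vectorial equation \eqref{eqn:equiv2''} to automatically imply the scalar equation \eqref{eqn:equiv1'}, and which explains why the sphere is the natural locus for the simplification. The other delicate point lies in the sufficiency direction: the step that removes the factor $v_i$ from \eqref{eqn:equiv2'} is legitimate only because Corollary~\ref{cor:nonzero}, and hence the inductive assumption that the conjecture holds for $p\leq p_0-1$, guarantees that no component of a non-zero solution vanishes.
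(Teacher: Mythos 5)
Your proposal is correct and follows essentially the same route as the paper's proof: both directions hinge on Theorem~\ref{thm:equiv}, Corollary~\ref{cor:nonzero}, the Kelvin/stereographic reduction of Proposition~\ref{prop:invariant}, and the same unit-sphere identity (your $\langle y_i,y_i-y_j\rangle=\tfrac12\y ij^2$ is the paper's $\y ij^2=2-2\langle y_i,y_j\rangle$) to recover \eqref{eqn:equiv1'} from \eqref{eqn:equiv2''}. The only difference is presentational: you phrase the argument as necessity/sufficiency whereas the paper phrases it as ``conjecture fails $\iff$ \eqref{eqn:equiv2''} has a non-zero solution,'' but the technical content is identical.
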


\begin{proof}
	Assume that the conjecture is false when $p=p_0$, then by Theorem~\ref{thm:equiv}, equations \eqref{eqn:equiv1} and \eqref{eqn:equiv2} have a non-zero solution, say $U=(u_1,\dots,u_p)\tran$.
	By Corollary~\ref{cor:nonzero}, each component of $U$ is non-zero.
	By Proposition~\ref{prop:invariant}, equations \eqref{eqn:equiv1'} and \eqref{eqn:equiv2'} have a non-zero solution $V=(v_1,\dots,v_p)\tran$, and each component $v_i$ is also non-zero.
	Thus we can derive equation \eqref{eqn:equiv2''} from \eqref{eqn:equiv2'} by dividing $v_i$ on both sides of the $i$-th equation for $1\leq i\leq p$.
	Therefore, equation \eqref{eqn:equiv2''} have a non-zero solution.
	
	Conversely, assume that equation \eqref{eqn:equiv2''} have a non-zero solution, say $V=(v_1,\dots,v_p)\tran$, we are going to show that the conjecture is false when $p=p_0$ by showing the existence of non-zero solutions for equations \eqref{eqn:equiv1'} and \eqref{eqn:equiv2'}.
	In fact, $V$ clearly satisfies equation \eqref{eqn:equiv2'}. For \eqref{eqn:equiv1'}, %for $1\leq i\leq p$, by multiplying $v_i$ on both sides of the $i$-th equation of \eqref{eqn:equiv2''}, we have the $i$-th equation of \eqref{eqn:equiv2'}.
	note that
	\begin{align*}
		\y ij^2 = \<y_i-y_j,y_i-y_j\> = 2-2\<y_i,y_j\>.
	\end{align*}
	Taking inner product of \eqref{eqn:equiv2''} with $2y_i$, we get
	\begin{align*}
		0
		=	\sum_{j\neq i} \frac{2-2\<y_i,y_j\>}{\y ij^3} v_j
		=	\sum_{j\neq i} \frac{\y ij^2}{\y ij^3} v_j
		=	\sum_{j\neq i} \frac{1}{\y ij} v_j,	\quad 1\leq i\leq p	%\label{eqn:equiv2''}
	\end{align*}
	Thus, $V$ satisfies \eqref{eqn:equiv1'} as well. In conclusion, $V$ is also non-zero a solution for equations \eqref{eqn:equiv1'} and \eqref{eqn:equiv2'}.
\end{proof}
\begin{proof}[Proof of Theorem~\ref{thm:B}]
	The proof is similar to that of Theorem~\ref{thm:A} and thus left to the reader.
\end{proof}
%-------------------------------------------------------------------------------
%-------------------------------------------------------------------------------

\subsection{Case 1. $m=1$}
In this subsection, we prove Conjecture~\ref{conj} when $m=1$ using Theorem~\ref{thm:equiv2}.

\begin{thm}\label{thm-m1}
	Conjecture~\ref{conj} is valid for $m=1$ and arbitrary $p$.
\end{thm}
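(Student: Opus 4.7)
The plan is to argue by induction on $p$; the base cases $p\le 3$ are Xu's result~\cite{Xu05}. Assume Conjecture~\ref{conj} holds in every dimension for $p\le p_0-1$ and let $p=p_0$ with $m=1$. By Theorem~\ref{thm:equiv2} it suffices to show that for any distinct $y_1,\dots,y_p\in\mathbb S^1$ equation \eqref{eqn:equiv2''} admits only the zero solution. Applying the Kelvin transform centred at $y_p$ sends $\mathbb S^1$ to a line, so---exactly as in the derivation just before Corollary~\ref{cor1}---the problem becomes: for ordered real numbers $y_1<\cdots<y_{p-1}$ and a scalar $v_p$, the system
\begin{align*}
  \sum_{\substack{j\le p-1\\j\ne i}}\frac{v_j}{|y_i-y_j|}+v_p=0,\qquad v_i\sum_{\substack{j\le p-1\\j\ne i}}\frac{y_i-y_j}{|y_i-y_j|^3}v_j=0,\quad 1\le i\le p-1,
\end{align*}
has only the trivial solution. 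We argue by contradiction; Corollary~\ref{cor:nonzero} then forces every $v_i$ (including $v_p$) to be nonzero.

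If $v_p=0$ the two families reduce exactly to \eqref{eqn:equiv1} and \eqref{eqn:equiv2} for the $p-1$ real points $y_1,\dots,y_{p-1}$, whence the inductive hypothesis combined with Theorem~\ref{thm:A} forces $v_1=\cdots=v_{p-1}=0$, contradicting nontriviality. The crux of the proof is therefore to rule out $v_p\ne0$. Since $m=1$, the sign of $(y_i-y_j)/|y_i-y_j|^3$ is determined by the ordering, and a direct computation shows that the two displayed equations at index $i$ are equivalent to the assertion that the rational function
\begin{align*}
  \widetilde G_i(t):=\sum_{j<i}\frac{v_j}{t-y_j}-\sum_{\substack{j>i\\j\le p-1}}\frac{v_j}{t-y_j}+v_p
\end{align*}
has a double zero at $y_i$.

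Multiplying by $\widetilde Q_i(t):=\prod_{k\le p-1,\,k\ne i}(t-y_k)$ yields a polynomial $\widetilde R_i:=\widetilde G_i\widetilde Q_i$ of degree $p-2$ with leading coefficient $v_p$, vanishing doubly at $y_i$, and prescribed at the remaining nodes by
\begin{align*}
  \widetilde R_i(y_k)=\sgn(i-k)\,v_k\prod_{\substack{\ell\ne i,k\\\ell\le p-1}}(y_k-y_\ell),\qquad k\ne i,\ k\le p-1.
\end{align*}
Factoring $\widetilde R_i(t)=(t-y_i)^2 h_i(t)$, the auxiliary polynomial $h_i$ has degree $p-4$ and leading coefficient $v_p$, and its remaining $p-4$ free coefficients are over-determined by the $p-2$ interpolation conditions above, producing two consistency relations per $i$.

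The plan is to show that the resulting $2(p-1)$ consistency relations are incompatible with $v_p\ne0$, returning us to the case $v_p=0$. For $p=4$ the polynomial $h_i\equiv v_p$ is a constant, and comparing the interpolation identity at the pairs $(i,k)=(1,3)$ and $(2,3)$ gives $v_p(y_3-y_1)^2=-v_3(y_3-y_2)$ and $v_p(y_3-y_2)^2=-v_3(y_3-y_1)$, whose ratio forces $(y_3-y_1)^3=(y_3-y_2)^3$ and hence the contradiction $y_1=y_2$. The main obstacle will be to extend this elimination to $p\ge5$: the natural route is to regard $h_i(y_k)$ as the evaluation of a polynomial of degree $\le p-4$ at $p-2$ nodes and to exploit the symmetry of the interpolation formula under the exchange $(i,k)\leftrightarrow(k,i)$ to obtain a polynomial identity in the strictly ordered $y_j$'s whose only solution is the degenerate one $v_p=0$; an alternative, if this direct elimination proves unwieldy, is to apply the inductive hypothesis a second time to the reduced configuration $y_1,\dots,y_{p-1}$ through Corollary~\ref{cor1} to constrain the admissible $h_i$.
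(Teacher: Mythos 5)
Your route is genuinely different from the paper's: rather than staying on $\mathbb S^1$, you apply a further Kelvin transform to return to the real line and then encode the two linear conditions at each index $i$ as the statement that a rational function $\widetilde G_i$ has a double zero at $y_i$, turning the problem into a polynomial over-determination/interpolation question. The derivation of $\widetilde G_i$ from the transformed system is correct (given that Corollary~\ref{cor:nonzero} forces all $v_i\ne 0$, which it does, since each Kelvin transform rescales $u_i$ by a nonzero factor), and your explicit elimination for $p=4$ is a valid complete argument: the two interpolation identities at $(i,k)=(1,3)$ and $(2,3)$ do force $(y_3-y_1)^3=(y_3-y_2)^3$ and hence $y_1=y_2$, a contradiction.

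However, the proof as written is incomplete for $p\geq 5$: you state the existence of two consistency relations per index, propose that a symmetry under $(i,k)\leftrightarrow(k,i)$ or a second invocation of the inductive hypothesis ``should'' yield a contradiction, and stop there. This is a plan, not a proof; the crux of the theorem --- handling arbitrary $p$ --- is precisely the part left unexecuted. It is also unclear that the elimination you propose will close uniformly: the number of free parameters ($p-1$ values $v_i$, $p-1$ points $y_i$, modulo translation, scaling, and overall normalization) grows in step with the number of consistency relations $2(p-1)$, so a naive count does not obviously force degeneracy, and exploiting the $(i,k)$-symmetry requires an actual computation you have not carried out. By contrast, the paper works on $\mathbb S^1$ directly, writes $y_k=\me^{\mi\alpha_k}$, and after two algebraic manipulations (taking an imaginary part and a weighted sum) reduces everything to $CW=0$ with $C$ the $\pm 1$ antisymmetric sign matrix; the parity-dependent rank analysis of $C$ then kills all $p$ at once, with no case-by-case elimination. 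If you want to salvage your approach, you would need to actually derive and solve the over-determined interpolation system for general $p$, or show that the consistency relations reduce to a rank statement similar to the paper's $CW=0$.
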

\begin{proof}
	It suffices to prove the case $p=p_0\geq4$, under the assumption that the cases $p\leq p_0-1$ hold.
	
%	To avoid confusion between the summation index $i$ and the imaginary unit $\mi$, we shall change the index $i$ to $k$.
	
	Without lost of generality, we may assume $y_k=\me^{\mi\alpha_k}\in \mathbb{S}^1\hookrightarrow\mathbb C$ for $1\leq k\leq p$ with	
	$$	0\leq\alpha_1<\alpha_2<\dots<\alpha_p<2\pi.	$$
	We shall prove, the equations
	\begin{align}\label{eqn:equiv2'''}
		\sum_{j\neq k} \frac{y_k-y_j}{\y kj^3} v_j =0,	\quad 1\leq k\leq p	
	\end{align}
	for $v_1,\dots,v_p$ have only zero solution.
	We prove this by contradiction.
	
	Assume that $V=(v_1,\dots,v_p)\tran$ is a non-zero solution for equation \eqref{eqn:equiv2'''}, then each component of $V$ is non-zero by a similar argument as in Corollary~\ref{cor:nonzero}.
	Notice the following identities:
	%\begin{align*}
	%	\y kj^2 = \<y_k-y_j,y_k-y_j\>	= 2-2\<y_k,y_j\>,
	%\end{align*}
	\begin{align*}
			y_k-y_j = \me^{\mi\alpha_k}-\me^{\mi\alpha_j}
		&=	(\cos\alpha_k-\cos\alpha_j)+\mi(\sin\alpha_k-\sin\alpha_j)	\\
		&=	-2\sin\frac{\alpha_k+\alpha_j}{2}\sin\frac{\alpha_k-\alpha_j}{2} +2\mi\cos\frac{\alpha_k+\alpha_j}{2}\sin\frac{\alpha_k-\alpha_j}{2}	\\
		&=	2\sin\frac{\alpha_k-\alpha_j}{2}	(-\sin\frac{\alpha_k+\alpha_j}{2}+\mi\cos\frac{\alpha_k+\alpha_j}{2})	\\
		&=	2\sin\frac{\alpha_k-\alpha_j}{2}	\me^{\mi\frac{\alpha_k+\alpha_j+\pi}{2}},	\\
		\y kj = |\me^{\mi\alpha_k}-\me^{\mi\alpha_j}| &= 2\sgn(k-j)\sin\frac{\alpha_k-\alpha_j}{2}>0.
	\end{align*}
	Substituting the identities above in \eqref{eqn:equiv2'''}, we obtain:
	\begin{align*}
	%	\sum_{j\neq k} \frac{2\sin\frac{\alpha_k-\alpha_j}{2}	\me^{\mi\frac{\alpha_k+\alpha_j+\pi}{2}}}{(2\sgn(k-j)\sin\frac{\alpha_k-\alpha_j}{2})^3} v_j =0,	\quad 1\leq k\leq n	 \\
		%&	\sgn(k-s) \frac{	\me^{\mi\frac{\alpha_s}{2}}}{(\sin\frac{\alpha_k-\alpha_s}{2})^2} v_s	+\sum_{j\neq k,s} \sgn(k-j) \frac{	 \me^{\mi\frac{\alpha_j}{2}}}{(\sin\frac{\alpha_k-\alpha_j}{2})^2} v_j	\\
		%&=	
		\sum_{j\neq k} \sgn(k-j) \frac{	\me^{\mi\frac{\alpha_j}{2}}}{(\sin\frac{\alpha_k-\alpha_j}{2})^2} v_j
		=	0,	\quad 1\leq k\leq p.
	\end{align*}
	
	For each $1\leq s\leq p$, multiplying $\me^{\mi\frac{-\alpha_s}{2}}$ on both sides of all equations but the $s$-th one, we have
	%对上?的除第$p$个方程以外的方程乘以$\me^{\mi\frac{-\alpha_p}{2}}$, 有
	$$
	\sgn(k-s)\frac{1}{(\sin\frac{\alpha_k-\alpha_s}{2})^2}v_s	+	\sum_{j\neq k,s}\sgn(k-j)\frac{	\me^{\mi\frac{\alpha_j-\alpha_s}{2}}}{(\sin\frac{\alpha_k-\alpha_j}{2})^2} v_j=0,	 \quad	1\leq k\neq s\leq p.
	$$
	Taking the imaginary part, we get:
	\begin{align}\label{eqn:imagine}
		\sum_{j\neq k,s} \sgn(k-j) \frac{	\sin\frac{\alpha_j-\alpha_s}{2}}{(\sin\frac{\alpha_k-\alpha_j}{2})^2} v_j=0,	\quad	1\leq k\neq s\leq p.
	\end{align}
	
	%-------------------------------------------------------------------------------
	
	Note that \eqref{eqn:equiv2'''} is also equivalent to
	\begin{align*}
		%\sgn(k-s)	\frac{\me^{\mi\alpha_k}-\me^{\mi\alpha_s}}{(\sin\frac{\alpha_k-\alpha_s}{2})^3} v_s
		\sum_{j\neq k}\sgn(k-j) \frac{\me^{\mi\alpha_k}-\me^{\mi\alpha_j}}{(\sin\frac{\alpha_k-\alpha_j}{2})^3} v_j =0,	\quad 1\leq k\leq p.	
	\end{align*}
	%$\forall s:1\leq s\leq p$,
	Multiplying $v_k\sin\frac{\alpha_k-\alpha_s}{2}\me^{-\mi\frac{\alpha_k}{2}}$ on the equation above, and setting $$w_k=\me^{-\mi\frac{\alpha_k}{2}}v_k, 	 \quad 1\leq k\leq p,	$$  we get:
	%\begin{align*}
	%%	&\sum_{j\neq k}\sgn(k-j) \frac{\me^{\mi\alpha_k}-\me^{\mi\alpha_j}}{(\sin\frac{\alpha_k-\alpha_j}{2})^3} w_jw_k\sin\frac{\alpha_k-\alpha_p}{2}\me^{-\mi\frac{\alpha_k}{2}} =0,	 \quad k\neq p,1\leq k\leq n	\\
	%	\sgn(k-p) \frac{\me^{\mi\alpha_k}-\me^{\mi\alpha_p}}{(\sin\frac{\alpha_k-\alpha_p}{2})^2} w_pw_k\me^{-\mi\frac{\alpha_k}{2}} +
	%	\sum_{j\neq k,p} \sgn(k-j) \frac{	\sin\frac{\alpha_k-\alpha_p}{2}} {(\sin\frac{\alpha_k-\alpha_j}{2})^2} 2\mi\me^{\mi\frac{\alpha_j}{2}}w_jw_k =0,	\quad k\neq p,1\leq k\leq n.
	%\end{align*}
	$$
		\sgn(k-s) \frac{\me^{\mi\alpha_k}-\me^{\mi\alpha_s}}{(\sin\frac{\alpha_k-\alpha_s}{2})^2} v_sw_k	+\sum_{j\neq k,s} \sgn(k-j) \frac{	\sin\frac{\alpha_k-\alpha_s}{2}} {(\sin\frac{\alpha_k-\alpha_j}{2})^2} 2\mi\me^{\mi\frac{\alpha_j}{2}}v_jv_k =0,	\quad 1\leq k\neq s\leq p.
	$$
	Taking sum over $k$, we get
	\begin{align*}
	0	
	&=	\sum_{k\neq s}\sgn(k-s) \frac{\me^{\mi\alpha_k}-\me^{\mi\alpha_s}}{(\sin\frac{\alpha_k-\alpha_s}{2})^2} v_sw_k +
	\sum_{k\neq s}\sum_{j\neq k,s} \sgn(k-j) \frac{	\sin\frac{\alpha_k-\alpha_s}{2}} {(\sin\frac{\alpha_k-\alpha_j}{2})^2} 2\mi\me^{\mi\frac{\alpha_j}{2}}v_jv_k	\\
	&\xlongequal{\text{Exchange the order}}
	v_s\sum_{k\neq s}\sgn(k-s) \frac{\me^{\mi\alpha_k}-\me^{\mi\alpha_s}}{(\sin\frac{\alpha_k-\alpha_s}{2})^2} w_k +
	\sum_{j\neq s}\sum_{k\neq j,s} \sgn(k-j) \frac{	\sin\frac{\alpha_k-\alpha_s}{2}} {(\sin\frac{\alpha_k-\alpha_j}{2})^2} 2\mi\me^{\mi\frac{\alpha_j}{2}}v_jv_k	\\
	&=	v_s\sum_{k\neq s}\sgn(k-s) \frac{\me^{\mi\alpha_k}-\me^{\mi\alpha_s}}{(\sin\frac{\alpha_k-\alpha_s}{2})^2} w_k +
	\sum_{j\neq s}	2\mi\me^{\mi\frac{\alpha_j}{2}}v_j \sum_{k\neq j,s} \sgn(k-j) \frac{	\sin\frac{\alpha_k-\alpha_s}{2}} {(\sin\frac{\alpha_k-\alpha_j}{2})^2} v_k	\\
	&\xlongequal{\eqref{eqn:imagine}} v_s\sum_{k\neq s}\sgn(k-s) \frac{\me^{\mi\alpha_k}-\me^{\mi\alpha_s}}{(\sin\frac{\alpha_k-\alpha_s}{2})^2} w_k.
	\end{align*}
	That is,
	$$v_s\sum_{k\neq s}\sgn(k-s) \frac{\me^{\mi\alpha_k}-\me^{\mi\alpha_s}}{(\sin\frac{\alpha_k-\alpha_s}{2})^2} w_k=0,\quad1\leq s\leq p,$$
	or equivalently,
	\begin{align*}
		\sum_{k\neq s}\sgn(k-s) \frac{y_k-y_s}{\y ks^2} w_k=0,\quad1\leq s\leq p.
	\end{align*}
	Taking inner product with $2y_s$ gives
	\begin{equation}\label{eq-CW0}
		\sum_{k\neq s}\sgn(k-s) w_k=0,\quad1\leq s\leq p.
	\end{equation}
	In other words, setting $$C=(\sgn(k-s))_{k,s} =\begin{pmatrix}
	0&1&1&\dots&1\\
	-1&0&1&\dots&1\\
	-1&-1&0&\dots&1\\
	\vdots&\vdots&\vdots&\ddots&\vdots\\
	-1&-1&-1&\dots&0
	\end{pmatrix},	\quad W=(w_1,w_2,\dots,w_p)\tran\in\mathbb C^p,$$
	we have $CW=0$ by (\ref{eq-CW0}).
	
	When $p$ is even, one easily sees that $\det C=1$ by expanding the determinant, thus $W=0$, and hence $V=0$, a contradiction.
	
	When $p$ is odd, $\det C=0$. Evidently, we can find a non-zero $(p-1)\times(p-1)$ minor, and thus $\rank C=p-1$.
	We have
	$$
		CW = C\begin{pmatrix}\me^{\mi\frac{\alpha_1}{2}v_1}\\\me^{\mi\frac{\alpha_2}{2}v_2}\\\vdots\\\me^{\mi\frac{\alpha_p}{2}v_p}
		\end{pmatrix}=0,
	$$ in this case.
	Consider the real and imaginary parts of $W$:
	$$
		C\begin{pmatrix}\cos\frac{\alpha_1}{2}v_1\\\cos\frac{\alpha_2}{2}v_2\\\vdots\\\cos\frac{\alpha_p}{2}v_p
		\end{pmatrix}=0,\quad
		C\begin{pmatrix}\sin\frac{\alpha_1}{2}v_1\\\sin\frac{\alpha_2}{2}v_2\\\vdots\\\sin\frac{\alpha_p}{2}v_p
		\end{pmatrix}=0.
	$$
	Since $V\neq 0$,
	$$
	\begin{pmatrix}\cos\frac{\alpha_1}{2}v_1\\\cos\frac{\alpha_2}{2}v_2\\\vdots\\\cos\frac{\alpha_p}{2}v_p
	\end{pmatrix},	\text{\quad and\quad}
	\begin{pmatrix}\sin\frac{\alpha_1}{2}v_1\\\sin\frac{\alpha_2}{2}v_2\\\vdots\\\sin\frac{\alpha_p}{2}v_p
	\end{pmatrix}
	$$
	are two solutions for $CZ=0$, where $Z=(z_1,\dots,z_p)$ is a real variable.
	But since
	$$
		\begin{vmatrix}
			\cos\frac{\alpha_1}{2}v_1&\cos\frac{\alpha_2}{2}v_2\\
			\sin\frac{\alpha_1}{2}v_1&\sin\frac{\alpha_2}{2}v_2
		\end{vmatrix}
		=\sin\frac{\alpha_2-\alpha_1}{2}v_1v_2\neq0,
	$$
	the two solutions are linearly independent, which is contradictory to the previous conclusion that $\rank C=p-1$.
	
	In either case, we yield a contradiction, which shows that the assumption that $V=(v_1,\dots,v_p)\tran$ is a non-zero solution for equation \eqref{eqn:equiv2'''} is false, proving the conjecture.
\end{proof}

%-------------------------------------------------------------------------------
%-------------------------------------------------------------------------------

\subsection{Case 2. $p=4,5$}
In this subsection, we deal with the cases $p=4,5$. We manage to reduce them down to $m=1$.
\begin{thm}\label{thm-p45}
	Conjecture~\ref{conj} is valid for $p=4,5$ and arbitrary $m$.
\end{thm}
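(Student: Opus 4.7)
The plan is to invoke Theorem~\ref{thm:equiv2}, which reduces the conjecture for $p\in\{4,5\}$ to showing that for any distinct $y_1,\dots,y_p\in\mathbb{S}^m$ the vector system
\begin{equation*}
\sum_{j\neq i}\frac{y_i-y_j}{|y_i-y_j|^3}v_j=0,\quad 1\leq i\leq p,
\end{equation*}
has only the zero solution. By Corollary~\ref{cor:nonzero}, every component $v_i$ of any purported non-zero solution must be non-zero, a fact that will be used repeatedly. By Remark~\ref{rmk:m}, it suffices to consider $m\geq2$.

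For $p=4$ with $m\geq2$, suppose a non-zero $V$ exists. The $i=1$ equation is a non-trivial linear dependence among the three vectors $y_1-y_j\in\mathbb{R}^{m+1}$ ($j=2,3,4$), with all coefficients $v_j/|y_1-y_j|^3$ non-zero. Hence these three vectors span at most a $2$-dimensional subspace, so $y_1,y_2,y_3,y_4$ lie in a common $2$-dimensional affine subspace $\Pi\subset\mathbb{R}^{m+1}$, whose intersection with $\mathbb{S}^m$ is a circle $C$. The invariance of the system under ambient isometries and scalings allows us to identify $C$ with the unit circle $\mathbb{S}^1$, upon which the restricted system is precisely the $m=1$ instance of Theorem~\ref{thm:equiv2}. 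Theorem~\ref{thm-m1} therefore forces $V=0$, a contradiction.

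For $p=5$ with $m\geq2$, the same argument applied to the $i=1$ equation (now involving four vectors in $\mathbb{R}^{m+1}$ with all non-zero coefficients) forces $y_1,\dots,y_5$ into a common $3$-dimensional affine subspace, hence onto a $2$-sphere inside $\mathbb{S}^m$. This reduces $m\geq3$ to $m=2$. For $m=2$ I apply the Kelvin transform at $y_5$ (Corollary~\ref{cor1}) to send $y_5$ to infinity; the system becomes four scalar and four vector equations on the finite image points $y_1,\dots,y_4\in\mathbb{R}^2$, together with the auxiliary unknown $v_5$. Since every $v_i\neq0$, the vector equations reduce to the $p=4$ vector system on these four points, while the scalar equations carry the constant right-hand side $-v_5$; combining the two via the just-established $p=4$ result and the $m=1$ base case of Theorem~\ref{thm-m1} forces $v_1=\cdots=v_4=0$ and subsequently $v_5=0$, the desired contradiction.

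The main obstacle is the case $p=5$, $m=2$: the elementary dimension count no longer reduces $m$ further (four vectors in $\mathbb{R}^3$ are automatically dependent), so one must combine the Kelvin transform with the now-established $p=4$ case and the base case Theorem~\ref{thm-m1}, taking particular care to eliminate the inhomogeneous contribution from the free parameter $v_5$ appearing on the right-hand side of the scalar equations after sending a point to infinity. Once this delicate step is completed, all remaining cases of the theorem follow.
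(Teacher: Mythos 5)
Your $p=4$ argument is correct and, I think, cleaner than the paper's: you work throughout with the spherical form \eqref{eqn:equiv2''} of Theorem~\ref{thm:equiv2}, use the single $i=1$ equation as a non-trivial linear dependence (valid because Corollary~\ref{cor:nonzero} guarantees all $v_j\neq0$) to place the four points in a 2-plane, hence on a circle, and then invoke the translation/scaling invariance of \eqref{eqn:equiv2''} together with Theorem~\ref{thm-m1}. The paper instead works with \eqref{eqn:equiv1-p}--\eqref{eqn:equiv2-p} in affine form, uses the projection argument plus an explicit Kelvin transform to move the circle through $x_1,x_2,x_3$ to a line, and finally shows $x_4$ must be on that line; both arguments are sound, and yours saves a case distinction. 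Your first reduction for $p=5$ (one non-trivial dependence of four vectors $\Rightarrow$ all five points lie on a 2-sphere, so WLOG $m=2$) also matches the first step of the paper, though the paper phrases it via projecting onto the orthogonal complement of $E_1$ to get $u_5=0$.

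However, your final step ($p=5$, $m=2$) has a genuine gap. After the Kelvin transform at $y_5$, the vector equations on $z_1,\dots,z_4\in\mathbb R^2$ do indeed take the form $\sum_{j\neq i}\frac{z_i-z_j}{|z_i-z_j|^3}v'_j=0$, but this is \emph{not} something the just-established $p=4$ result controls: Theorem~\ref{thm:equiv2} for $p=4$ asserts triviality of \eqref{eqn:equiv2''} only when the four points lie on a common sphere/circle, whereas $z_1,\dots,z_4$ here are four arbitrary points of $\mathbb R^2$ (a stereographic image of four arbitrary points of $\mathbb S^2$), and in general they do not lie on a circle. Likewise, Theorem~\ref{thm:A} for $p=4$ needs \emph{both} \eqref{eqn:equiv1} and \eqref{eqn:equiv2}, but after the Kelvin transform the scalar equations read $\sum_{j\neq i}\frac{v'_j}{|z_i-z_j|}=-v'_5$ with the extra inhomogeneous term $-v'_5$, which by Corollary~\ref{cor:nonzero} is non-zero, so they do not coincide with \eqref{eqn:equiv1}. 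The phrase ``combining the two \dots{} forces $v_1=\cdots=v_4=0$'' is exactly the missing argument; nothing you have set up delivers this. The paper closes this gap differently: it does not attempt a reduction to $p=4$, but instead picks three of the five coplanar points whose circumscribed circle encloses the other two, Kelvin-transforms so those three are collinear with the remaining two strictly on one side, projects \eqref{eqn:equiv2-p} onto the normal direction to obtain $\x{i}{5}/\x{i}{4}=K$ for $i=1,2,3$, and then plays the scalar equations \eqref{eqn:equiv1-p} at $i=4,5$ against each other to derive $c=-K^2<0$, a contradiction since $c>0$ by the same-side choice. To complete your proof you would need to supply an argument of this kind (or a new one) to handle the genuinely two-dimensional configuration; the dimension count and the ``$p=4$ plus $m=1$'' combination you invoke do not suffice.
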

\begin{proof}
Assume that $U=(u_1,\dots,u_p)\tran$ is a non-zero solution for equations \eqref{eqn:equiv1} and \eqref{eqn:equiv2} for some $x_1,\dots,x_p\in\mathbb R^m$.
It suffices to \textit{transform} $x_1,\dots,x_p$ to lie on the same line, yielding a contradiction by Theorems~\ref{thm:A} and \ref{thm-m1}.

\subsubsection{$p=4$}
We may assume that $m=3$ by Remark~\ref{rmk:m}, and that each component of $U$ is non-zero by Corollary~\ref{cor:nonzero}.
Thus, we have
\begin{align}
	\sum_{j\neq i} \frac{u_j}{\x{i}{j}} &= 0,	\quad 1\leq i\leq p	\label{eqn:equiv1-p}\\
	\sum_{j\neq i} \frac{x_i-x_j}{\x{i}{j}^3} u_j &=0,	\quad 1\leq i\leq p	\label{eqn:equiv2-p}.
\end{align}
Choose the $x$-axis of $\mathbb R^3$ as the line passing $x_1$  and $x_2$.
If $x_3$ and $x_4$ are on $x$-axis, then we have completed the proof by contradiction.

If $x_3,x_4$ are not on the $x$-axis, then from \eqref{eqn:equiv2-p}, we claim that $x_1,\dots,x_4$ must be on the same 2-dimensional plane.
Otherwise, if $x_4$ is outside the plane defined by $x_1,x_2,x_3$, then $u_4$ must be zero by equation \eqref{eqn:equiv2-p} when $i = 1$, a contradiction.

Since equations \eqref{eqn:equiv1-p} and \eqref{eqn:equiv2-p} are invariant under the Kelvin
transform by Proposition \ref{prop:invariant}, we can make $x_1 ,x_2 ,x_3$ co-line by transforming the circle through them into a line.
If $x_4$ is not on the line of $x_1 ,x_2 ,x_3$, then $u_4 = 0$ in the above equation for $i = 4$, a contradiction again. Finally we still come to the co-line case and thus complete the proof for $p=4$.
%Now all the 4 points lie on a line.
%Therefore, the conjecture holds for $p=4$, $\forall m$.

\subsubsection{$p=5$}
We may assume that $m\leq 4$.
Since the case $p=4$ has been verified, we may assume that $U\in\mathbb{R}^p$ $(u_i\neq0)$ is a solution to equations \eqref{eqn:equiv1-p} and \eqref{eqn:equiv2-p} again.

Denote $E_0:=\Span\{x_1-x_i\mid i=2,3,4,5\}$.

If $\dim E_0=4$, i.e., $x_1-x_5\notin E_1:=\Span\{x_1-x_i\mid i=2,3,4\}$, then $\dist{x_1-x_5}{E_1}:=\inf_{y\in E_1}|y-(x_1-x_5)|$ can be achieved at some point $y_0\in E_1$.
So $y_0-(x_1-x_5)\perp E_1$.
Let $i=1$ in \eqref{eqn:equiv2-p}, and make inner product between it and $y_0-(x_1-x_5)$, we have $u_5=0$, a contradiction. So $x_1-x_5\in E_1$, and thus $E_0=E_1$, $\dim E_0\leq 3$.

Now if $\dim E_0=3$, then we can assume the 5 points are in the same 3-dimensional subspace
and we can pick up 3 points such that the other 2 locate at the same side of the plane determined by the 3 points.
Suppose $x_1-x_4$ and $x_1-x_5$ are on the same side of the plane $E_2:=\Span\{x_1-x_2,x_1-x_3\}$.
We can write $$x_1-x_5=c(x_1-x_4)+a(x_1-x_2)+b(x_1-x_3)$$ for constants $a,b$ and $c>0$.
And we can find $y_1\in E_2$ such that $\alpha:=y_1-(x_1-x_4)\perp E_2$.
Let $i=1,2,3$ in \eqref{eqn:equiv2-p}, and make inner products between each of them and $\alpha$, then
\begin{align*}
	\frac{\<\alpha,x_1-x_4\>}{\x14^3} u_4	+	\frac{\<\alpha,x_1-x_5\>}{\x15^3} u_5	&=0,	\\
	\frac{\<\alpha,x_1-x_4\>}{\x24^3} u_4	+	\frac{\<\alpha,x_1-x_5\>}{\x25^3} u_5	&=0,	\\
	\frac{\<\alpha,x_1-x_4\>}{\x34^3} u_4	+	\frac{\<\alpha,x_1-x_5\>}{\x35^3} u_5	&=0.
\end{align*}
Note that $\<\alpha,x_1-x_5\> = c\<\alpha,x_1-x_4\><0$, thus we have
\begin{align*}
	\frac{\x15}{\x14}	=\frac{\x25}{\x24}	=\frac{\x35}{\x34}	:=K%:=-\sqrt[3]{\frac{cu_5}{u_4}},	
	,\quad	u_5 = -\frac1c K^3 u_4.
\end{align*}
Let $i=4,5$ in \eqref{eqn:equiv1-p}, then
\begin{align*}
	\frac{u_1}{\x41}	+\frac{u_2}{\x42}	+\frac{u_3}{\x43}	-\frac{K^3u_4}{c\x45}	&= 0,	\\
	\frac{u_1}{K\x41}	+\frac{u_2}{K\x42}	+\frac{u_3}{K\x43}	+\frac{u_4}{\x45}	&= 0.
\end{align*}
Then we have $c=-K^2<0$,
a contradiction. This shows that $x_1-x_4\in E_2$, i.e., $E_0=E_1=E_2$. So $\dim E_0\leq2$.

Now, we have all the five points on a 2-dimensional plane.
Pick up 3 of them such that the circle they form enclose the other 2 points. Under a proper Kelvin transform, we may assume that the 3 lie on a line, and the other 2 lie in the same side of the line.
By similar contradiction argument as above, we conclude that all the 5 points lie on the same line and thus $U=0$ by Theorem \ref{thm-m1}, hence a contradiction to Theorem \ref{thm:A}.
This completes the proof for $p=5$.
\end{proof}

%%%%%%%%%%%%%%%%%%%%%%%
\begin{ack}
The authors would like to thank the referees for their valuable comments.
\end{ack}
%-------------------------------------------------------------------------------
%-------------------------------------------------------------------------------
%-------------------------------------------------------------------------------

\end{document}